\documentclass{amsart}

\usepackage[pagewise]{lineno}

\usepackage{graphicx}
\usepackage{float}

\usepackage{amsmath}
\usepackage{amsthm}
\usepackage{amsfonts}
\usepackage{amssymb}
\usepackage{mathtools}

\usepackage{amsaddr}

\usepackage{subfiles}

\usepackage[dvipsnames]{xcolor}
\usepackage{color,soul}

\usepackage[square,numbers]{natbib}

\usepackage{enumerate}
\usepackage[shortlabels]{enumitem}

\theoremstyle{plain}
\newtheorem{thm}{Theorem}[section]
\newtheorem{prop}[thm]{Proposition}
\newtheorem{lem}[thm]{Lemma}

\newtheorem{conj}[thm]{Conjecture}

\theoremstyle{definition}
\newtheorem{defi}[thm]{Definition}

\allowdisplaybreaks

\title[Physical Measure on Hyperbolic Repelling Fixed Point]{Smooth Circle Covering with a Physical Measure on a Hyperbolic Repelling Fixed Point}

\author[1]{Rubio Gunawan}
\address[1]{Scuola Internazionale Superiore di Studi Avanzati (SISSA), Trieste, Italy}
\address[1]{Abdus Salam International Centre for Theoretical Physics (ICTP), Trieste, Italy.}
\thanks{We thank Stefano Luzzatto for his supervision, and the anonymous referee for the suggestion to improve the regularity of our results to $C^\infty$.}
\date{\today}

\begin{document}


\maketitle

\section{Main Result and Discussion}
\subsection{Setting and Literature Study}

In dynamical systems, given a point $x$, we often look for descriptions of long-term behavior of the forward orbit $\{x_n\}_{n\geq0}$. 
In ergodic theory we use the notion of empirical measure, which is the sequence of measures defined by $\frac{1}{n} \sum_{i = 0}^{n-1} \delta_{x_i}$.
Given a measure $\mu$, its basin of attraction $B_\mu$ is the set of points whose sequence of empirical measures weakly converge to $\mu$:
$B_\mu := \left\{x : \lim_{n \to \infty}\frac{1}{n} \sum_{i = 0}^{n-1} \delta_{x_i} \to \mu \right\}$.
A measure $\mu$ is called a physical measure if $B_\mu$ has positive Lebesgue measure.

A fixed point $p$ supporting a physical measure $\delta_p$ is an example of a statistical attractor.
A point $x \in B_{\delta_p}$ is not attracted to $p$ in the classical sense, but given an arbitrarily small neighborhood of $p$, for sufficiently large timescales the orbit will almost always be inside that neighborhood.
A topological attractor (such as a fixed point $p$ with $|f'(p)|<1$) is a trivial example of a statistical attractor.
A well-known example of statistical attractors that are not topological attractors are the intermittency maps of \cite{PM80} and \cite{LSV99}, which feature a topologically repelling indifferent fixed point $p$ with $f'(p) = 1$.

In this paper we will focus on the more interesting and counterintuitive situation of 
a hyperbolic repelling fixed point with $|f'(p)|>1$.
We coin a new term for fixed points that are hyperbolic repelling and also
statistical attractors.
\begin{defi}
    $p$ is called a Sisyphus Attractor if $|f'(p)| > 1$ and $|B_{\delta_p}|>0$.
\end{defi}

Hofbauer and Keller \cite{HofKel90} first proved the existence of Sisyphus Attractors for unimodal maps using the rich combinatorics of the kneading sequence within a full family.
We consider instead the setting of circle coverings, which can also be seen as full branch maps in which all branches are orientation preserving.

\begin{defi}
    $\mathcal{D}$ is the set of circle local homeomorphisms $f: \mathbb{S}^1 \to \mathbb{S}^1$ that are topologically conjugate to the doubling map ($x \mapsto 2x \text{ mod } 1$).
\end{defi}

As far as we know, there are no known examples of Sisyphus Attractors in the literature for maps in $\mathcal{D}$.
However, Benedicks-Misiurewicz \cite{BM89} and Zweim\"uller \cite{Zwe04} studied some unimodal maps with flat (all derivatives vanishing) critical points.
Both papers, using different arguments, show that integrability of $\log |f'|$ is equivalent to the existence of an absolutely continuous invariant probability measure (ACIP).
In particular, \cite{Zwe04} proves the existence of an induced Gibbs-Markov map with bounded distortion, and shows that existence of an ACIP is also equivalent to the integrability of first return time.
In the special case with full branches, by additional arguments, it should be possible to show that nonintegrability of first return time implies the existence of a Sisyphus Attractor.
By further arguments, including applying an involution, it might be possible to obtain a Sisyphus Attractor using the same methods for a circle covering, which might or might not be in~$\mathcal{D}$.

The purpose of this paper is to introduce a new method to find a Sisyphus Attractor for a circle covering map.
For $f \in \mathcal{D}$, let $p$ denote its unique fixed point, and $q$ denote the unique preimage of $p$ other than itself. We have:
\begin{thm}
\label{thm_smooth}
    $\exists f \in \mathcal D \cap C^\infty(\mathbb{S}^1)$ and $p$ is a Sisyphus Attractor.
\end{thm}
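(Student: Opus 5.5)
We construct $f$ as a degree-two smooth covering whose unique fixed point $p=0$ is hyperbolic repelling, $f'(p)=\lambda>1$. The derivative is made extremely degenerate (flat) at the other preimage $q$ of $p$. Orbits that land near $q$ are sent very close to $p$, and then spend about $\log(1/\mathrm{dist})/\log\lambda$ iterates escaping from $p$. If landing near $q$ sends points super-exponentially close to $p$, the escape times are non-integrable, and the orbit spends asymptotically full frequency near $p$.

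\textbf{Step 1: construction.} On the circle near $q$ we let
\[
f(x)=p+\operatorname{sign}(x-q)\,\exp\bigl(-|x-q|^{-\alpha}\bigr)
\]
for some $\alpha>0$. This function is $C^\infty$ with all derivatives vanishing at $q$, and it is orientation preserving. Near $p$ we take $f$ linear, $f(x)=p+\lambda(x-p)$. Away from $p$ and $q$ we interpolate smoothly with $f'>0$ and require $f'\ge\kappa>1$ outside a small neighborhood of $q$. Since $f$ is a local homeomorphism on $\mathbb S^1\setminus\{q\}$, monotone of degree two, with only a critical point of flat type, it is topologically conjugate to the doubling map. To see this, note that preimages of $p$ are dense (expansion away from $q$, and no wandering intervals, because any interval eventually hits a neighborhood of $q$ or expands), so the kneading coding is a bijection.

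\textbf{Step 2: induced map.} Take the first-return (or first-passage) map $F$ to a fundamental domain $\Delta$ away from $p$, for instance the interval $J=[f^{-1}(\text{outer boundary}),\dots]$. Points in $\Delta$ close to $q$ at distance $r$ land at distance $e^{-r^{-\alpha}}$ from $p$. They then stay in the linear neighborhood $U$ of $p$ for $n(r)\approx r^{-\alpha}/\log\lambda$ steps before exiting to $\Delta$. Branches of $F$ are full, and I expect bounded distortion on them. Near $p$ the map is linear, hence distortion-free. The flat branch near $q$ has distortion controlled on dyadic annuli $\{|x-q|\in[r,2r]\}$, because $\log$ of the derivative of $\exp(-r^{-\alpha})$ varies by at most $O(r^{-\alpha})$ relative to a scale on which the composition with the linear expansion is affine-like. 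I would refine the partition so that each branch maps a subinterval at scale $r$ onto $\Delta$ after exactly $n$ iterates. The Lebesgue measure of $\{\tau=n\}$ is then the measure of $\{r: r^{-\alpha}\approx n\log\lambda\}$, of order $n^{-1-1/\alpha}$ times a constant. Choosing $\alpha>1$ makes this summable but with tail $\sum_{m\ge n}|\{\tau=m\}|\sim n^{-1/\alpha}$ and $\int\tau=\sum n\cdot n^{-1-1/\alpha}=\infty$. So the return time is non-integrable.

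\textbf{Step 3: conclusion.} Since $F$ is a full-branch Gibbs–Markov map with bounded distortion, it has an ergodic ACIP $\nu$ equivalent to Lebesgue on $\Delta$. Because $\int\tau\,d\nu=\infty$, Birkhoff's theorem applied to truncations $\min(\tau,N)$ gives $\frac1k\sum_{j<k}\tau\circ F^j\to\infty$ for a.e.\ point. Hence the fraction of time the $f$-orbit spends outside any fixed neighborhood $V$ of $p$ tends to zero. Here we use that time outside $V$ during one excursion is bounded by a constant $C_V$ plus the portion near $q$, while the time inside $V$ is roughly $\tau$ minus $C_V$. Therefore the empirical measures converge to $\delta_p$ for Lebesgue-a.e.\ point of $\Delta$. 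Extending by preimages gives $|B_{\delta_p}|=1>0$.

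\textbf{Main obstacle.} The main obstacle is Step 2's distortion estimate. The flat point destroys the usual Koebe/Schwarzian control, so the bound must be built by hand from the explicit formula near $q$ combined with the exactly linear dynamics near $p$. The technical core is showing that $\log|(f^{n})'|$ oscillates by a bounded amount on each branch, including near the boundaries of the dyadic scales.
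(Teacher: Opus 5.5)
Your route is genuinely different from the paper's, and as a strategy it is sound. It is the Benedicks--Misiurewicz/Zweim\"uller mechanism the introduction mentions: a bounded-distortion full-branch induced map with non-integrable return time, where the heavy tail comes from $\log|f'|\notin L^1$ at the flat point, i.e.\ $\alpha\ge 1$. The paper does essentially the opposite. It prescribes the first return map $F$ on $[q,1]$ and realizes $f$ from $(f_1,F)$ (Proposition~\ref{prop_finite_realization}). It makes $F$ send most of each $K_n$ affinely onto $K_n^-$ with $\prod_n|L_n|/|K_n|>0$, so a positive-measure set has eventually strictly increasing return times (Propositions~\ref{prop_C_in_basin} and~\ref{prop_physical_0}). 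There the return times are integrable and the distortion is unbounded. No invariant measure or distortion estimate is needed, and flatness at $q$ serves only to make the gluing smooth (Lemma~\ref{lem_glueflat}). The price is that only $|B_{\delta_p}|>0$ is obtained in the $C^\infty$ case. Your route, once completed, would give $|B_{\delta_p}|=1$ for a $C^\infty$ map, which is exactly Conjecture~\ref{conj}. So Step~2, which carries all the weight and which you only assert, cannot be waved through.

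As written, the following points are wrong or missing.

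(a) Distortion is \emph{not} controlled on dyadic annuli. There $\log f'(x)=\log\bigl(\alpha|x-q|^{-\alpha-1}\bigr)-|x-q|^{-\alpha}$ oscillates by $\asymp r^{-\alpha}$. The right pieces are $D_n=\{x:|f(x)-p|\in[\delta\lambda^{-n},\delta\lambda^{-n+1})\}$, with $\delta$ the size of the linear region $V$. Writing $\log f'=\log|f-p|+\log(\alpha|x-q|^{-\alpha-1})$ shows the oscillation on $D_n$ is $\log\lambda+O(1/n)$. Also $f''/(f')^2\approx 1/|f-p|\approx\lambda^n/\delta$ is cancelled by the subsequent factor $\lambda^n$. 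This gives the Adler bound $\sup|F''|/(F')^2<\infty$ you actually need for an ACIP equivalent to Lebesgue.

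(b) ``Branches of $F$ are full'' requires a nice inducing interval, which you never specify. A convenient choice is the component $\Delta$ of $\mathbb{S}^1$ minus the period-two orbit that contains $q$. Design $f$ so that $\Delta$ contains the component of $f^{-1}(V)$ at $q$ and the set $\{f'<\kappa\}$. Then each return is one flat step, $n$ linear steps, and $\kappa$-expanding steps avoiding $q$. Even so, uniform expansion of $F$ is not automatic. The general lower bound is only $F'\ge\delta\alpha|x-q|^{-\alpha-1}$, which at the outer edge of $f^{-1}(V)$ can be $<1$. So you must tune constants (e.g.\ use $\exp(-c|x-q|^{-\alpha})$ with $c$ small) or pass to an iterate.

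(c) In Step~1, ``any interval eventually hits a neighborhood of $q$ or expands'' does not exclude homtervals, since an interval passing near $q$ is crushed. The no-wandering-interval theorems need non-flat critical points. Membership in $\mathcal D$ must instead be derived from the expansion of $F$.

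(d) In Step~3, the time outside $V$ per excursion is not bounded by a constant: excursions can shadow periodic orbits away from $\Delta\cup V$ for arbitrarily long. It does, however, have exponential tails, hence is $\nu$-integrable, and that is all the ratio argument needs.

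Minor: $|\{\tau=n\}|\asymp n^{-1-1/\alpha}$ is summable for every $\alpha>0$. What requires $\alpha\ge1$ is $\int\tau\asymp\sum n^{-1/\alpha}=\infty$.
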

Our argument for the proof of Theorem \ref{thm_smooth} also uses a full branch induced map but, unlike \cite{Zwe04}, our induced map has \emph{unbounded distortion} and \emph{integrable return times}. It is therefore a completely different mechanism from those used in the previous literature for unimodal maps. 
We also emphasize that in order to have a $C^\infty$ construction, we needed $q$ to be a flat critical point, but unlike the case of the unimodal maps, this flatness is \emph{not} essential to obtain a Sisyphus Attractor. We will elaborate this role of flatness further in Section \ref{section_flat}.
In fact, we obtain a full measure $B_{\delta_p}$ by sacrificing smoothness and flatness at $q$:
\begin{thm}
\label{thm_full}
    $\exists f \in \mathcal D \cap C^\infty(\mathbb{S}^1 \setminus \{q\})$ and $p$ is a Sisyphus Attractor with $|B_{\delta_p}| = 1$.
\end{thm}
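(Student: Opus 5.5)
We prove the statement by construction, building $f$ through a full branch induced map on a neighbourhood of $q$, as announced after Theorem \ref{thm_smooth}. Normalize $p=0$ and $q=1/2$ on $\mathbb{S}^1=[0,1)/\sim$. Take $f$ with two increasing full branches $f_0:[0,1/2]\to[0,1]$ and $f_1:[1/2,1]\to[0,1]$. Choose $f$ affine-like near $p$, say $f(x)=\lambda x$ with $\lambda>1$ on $[0,\epsilon)$ and $f(x)=1-\lambda(1-x)$ on $(1-\epsilon,1]$. Then $f'(p)=\lambda>1$ and $p$ is hyperbolic repelling. Near $q$ we put a singularity whose strength we tune. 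Points just to the right of $q$ are sent just to the right of $p$. They then spend roughly $\log(\epsilon/f(x))/\log\lambda$ iterates near $p$ before escaping. So the time spent near $p$ after visiting a point $x$ close to $q$ is $\tau(x)\approx |\log|f(x)-p||/\log\lambda$.

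The strategy is to make $|f(x)-p|$ extremely small relative to $|x-q|$. This makes the excursion times near $p$ large. For Theorem \ref{thm_full} we allow non-smoothness at $q$. For example, take $f(x)-p$ to behave like $\exp(-\exp(1/|x-q|))$, or even faster, on both sides, glued $C^\infty$ elsewhere. This is then smooth on $\mathbb{S}^1\setminus\{q\}$, and with a flat profile it is in fact $C^\infty$ at $q$ as well, which handles Theorem \ref{thm_smooth}. Conjugacy to the doubling map follows since $f$ is a degree-two covering with an expanding fixed point. Also every nontrivial interval eventually covers the circle: away from $q$ we make $f'\ge \lambda_0>1$, and near $q$ the image interval is still nondegenerate.

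Next we set up the induced map. Let $\Delta$ be a fixed fundamental domain away from $p$, for instance $\Delta=[\epsilon,\lambda\epsilon)$ or a neighbourhood region containing $q$. Let $F=f^{R}$ be the first return map to $\Delta$. Its branches are full, and $R$ is integrable with respect to Lebesgue, as claimed after Theorem \ref{thm_smooth}. The key quantitative claim is the following. For Lebesgue-a.e.\ $x$, let $R_n(x)$ be the $n$-th return time and let $T_n(x)$ be the total time spent outside $[-\epsilon,\epsilon]$ up to $R_n$. Then $T_n/R_n\to0$ as $n\to\infty$, for every $\epsilon$. This gives the empirical measures converging to $\delta_p$.

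To obtain this claim, we would show that the orbit of a typical point approaches $q$ ever more closely, at a rate making the excursion near $p$ dominate all previous time. Unbounded distortion is what permits this. Lebesgue measure is not preserved up to bounded constants, and the flat singularity at $q$ pushes mass toward $q$ under $F$. Concretely, we would estimate the pushforward of Lebesgue under $F$ restricted to the branch through $q$. We would then show inductively that with probability tending to one (summably), the $k$-th visit to a neighbourhood of $q$ lands at distance $d_k$ with $|\log(f(\cdot)-p)|$ exceeding, say, $k$ times the total elapsed time. A Borel--Cantelli argument then gives $|B_{\delta_p}|=1$, while positive measure suffices for Theorem \ref{thm_smooth}.

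The main obstacle is this distortion estimate: showing that typical orbits return closer and closer to $q$ rather than equidistributing. I would first try choosing the branch near $q$ so that $f$ maps $[q,q+\delta]$ onto $[p,p+\eta(\delta)]$ with $\eta$ superexponentially flat. The returning branches of $F$ then concentrate their preimage mass near $q$ by an explicit power-law comparison. The smooth case additionally requires checking that flatness on both sides of $q$ is compatible with $C^\infty$ gluing and with the monotonicity of the branches.
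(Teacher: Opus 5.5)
Your proposal has a genuine gap exactly at the step you call the main obstacle. For the map you describe, the proposed mechanism is false, not merely unproved. Take your profile $f(q+s)=\eta(s)$ with $\eta(s)=\exp(-\exp(1/s))$. The set of $s$ whose excursion near $p$ lasts exactly $k$ steps is an interval on which $\log\eta'$ varies by $O(1)$. Hence $f^{k+1}$ maps it onto the fundamental domain $[\epsilon,\lambda\epsilon)$ with bounded distortion, uniformly in $k$.

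So the hyperbolic fixed point erases the memory of how deep the excursion was. The exit point has density bounded above and below independently of $k$. The flat critical point pushes mass toward its critical value $p$, not back toward $q$. Successive excursion lengths are then, up to bounded distortion, a renewal sequence. The $k$-th excursion is a record with probability only of order $1/k$, and being $k$ times longer than the elapsed time is rarer still. Your ``probability tending to one'' and Borel--Cantelli step therefore cannot work, and typical orbits should not have eventually increasing excursion lengths at all.

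Two further claims fail for your $f$. First, $R$ is not integrable: an excursion starting at distance $s$ from $q$ lasts about $e^{1/s}$ steps, and $\int_0 e^{1/s}\,ds=\infty$. The remark after Theorem~\ref{thm_smooth} concerns the paper's specific induced map, not yours. Indeed, with integrable $R$ and a bounded-distortion full-branch induced map, the physical measure would be an a.c.i.p., not $\delta_p$. Second, a degree-two covering with a repelling fixed point need not be conjugate to the doubling map. You need a generating partition (Lemma~\ref{lem_generating_f} plus uniform expansion of an induced map), which ``the image interval is still nondegenerate'' does not supply.

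The missing idea is to build the correlation between consecutive return times into the induced map by hand. The paper does not choose $f$ near $q$ through a profile. It prescribes the first return map $F$ on $[q,1]$ and recovers $f_2|_{K_n}=f_1^{-(n-1)}\circ F|_{K_n}$ via Proposition~\ref{prop_finite_realization}. The construction has these features:
\begin{itemize}
\item $z_n-q$ is geometric, and $F$ maps a left subinterval $L_n\subset K_n$ of relative length $p_n$, with $\prod p_n>0$, affinely onto $K_n^-=(q,z_{n+1})$. That is, it sends points into branches of strictly larger return time.
\item The right end of each branch is a rescaled copy of $f_1$ near $q$. This is the adjusted gluing, which gives $C^\infty$ regularity off $q$.
\item $f_1'\to\infty$ at $q^-$ makes every branch convex.
\end{itemize}
Then \cite{Gun25} gives $|E|=|I_2|$, and Proposition~\ref{prop_C_in_basin} turns eventually increasing return times into $|B_{\delta_p}|=1$.

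If you want to keep your flat profile, the plausible mechanism is the opposite of what you wrote. You would need non-integrable excursion times, a bounded-distortion induced scheme yielding a conservative ergodic infinite a.c.i.m.\ that is finite off every neighbourhood of $p$, and Hopf's ratio ergodic theorem. This is essentially the Benedicks--Misiurewicz/Zweim\"uller route mentioned in the introduction. None of these ingredients is in your proposal. Since your $f$ is $C^\infty$ at $q$, completing that route would settle Conjecture~\ref{conj}, which the paper leaves open.
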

We also conjecture that $f \in C^\infty(\mathbb{S}^1)$ and $|B_{\delta_p}| = 1$
are compatible, but we were unable to prove it with our methods.
\begin{conj}
\label{conj}
     $\exists f \in \mathcal D \cap C^\infty(\mathbb{S}^1)$
     and $p$ is a Sisyphus Attractor with $|B_{\delta_p}| = 1$.
\end{conj}
Also observe that for the doubling map ($x \mapsto 2x \text{ mod } 1$), we have $|B_{\delta_p}| = 0$. 
This implies that the topological conjugacy of our results with the doubling map is singular.
The conjugacy also implies that the basins in our results are meagre.



\section{Outline of Proof}
\label{section_proof}
The general strategy for this paper is to find maps $f \in \mathcal{D}$ that have a particular first return map $F$,
in order to apply our previous results on wild attractors in \cite{Gun25}.

For the rest of the paper, for any given map $f \in \mathcal D$,
we will identify the circle $\mathbb{S}^1$ with the unit interval $I = [0,1]$ with endpoints identified,
the unique fixed point $p = 0 = 1$, and the other preimage of $p$ as a point $q \in (0,1)$.
We will then treat $f$ as an orientation preserving full branch map with 2 branches, with respect to 
the partition $\mathcal P = \{I_1 = [0,q), I_2 = [q,1)\}$.
We now study the first return map of $f$ on $I_2$.
First we introduce this related space of full branch maps defined on $[q,1]$.
\begin{defi}
Let $\mathcal{F}'$ denote the set of orientation-preserving full branch maps $F: [q,1] \to [q,1]$ , $F(q) = F(1) = 1$.
and branch domains $\{K_n\}_{n \in \mathbb N}$ defined by
\begin{equation}
\label{eqn_endpoints}
    K_n = [z_{n+1}, z_n),
\end{equation}
where $(z_n)_{n \geq 1}$ is a decreasing sequence with $z_1 = 1$ and $z_n \to q$.
\end{defi}
\begin{defi}
Let $\mathcal{F}$ be the set of maps in $\mathcal{F}'$ that have a generating partition.
\end{defi}
Let $f \in \mathcal D$, $\tau$ its first return time on $[q,1]$, and $F$ its first return map on $[q,1]$. Then $F \in \mathcal{F}$.
We now show how the dynamics of certain points by $F$ can give us information of their dynamics by $f$.
For a point $x \in I_2$, we have the sequence of return times $\{\tau_i(x)\}_{i \geq 0}$,
defined by $\tau_i(x) = \tau(F^{i}(x))$.
Define $E$ to be the set of points whose sequence of return times has a strictly increasing tail:
\begin{equation}
\label{eqn_E}
    E = \{x \in I_2 | \exists T \geq 0: \forall i \geq T, \tau_{i+1}(x) > \tau_i(x)\}.
\end{equation}
Note that $E$ is fully invariant by $F$. We then have:
\begin{prop}
\label{prop_C_in_basin}
    Let $f \in \mathcal{D}$, and $F$ be its first return map on $I_2$.
    Then $E \subset B_{\delta_0}$.
\end{prop}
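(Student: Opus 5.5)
Here is the plan. Let $x\in E$, and write the $f$-orbit of $x$ as a concatenation of excursions. The $i$-th excursion starts at $F^i(x)\in I_2$ and lasts $\tau_i(x)$ steps. The key structural fact is what such an excursion looks like. If $y\in K_n$ then $\tau(y)=n+1$: we have $f(y)\in I_2$ only when $n=0$ (the branch $K_1$), and otherwise $f(y)$ lands in $I_1$ close to $0$, where it stays for $n-1$ further steps before re-entering $I_2$. More precisely, since $f$ is a full-branch orientation preserving map with fixed point $0$, $f(K_n)$ is the set of points of $I_1$ whose first entry time into $I_2$ equals $n$. Thus an excursion of length $\tau$ consists of one point in $I_2$ followed by $\tau-1$ points of $I_1$ whose escape times from $I_1$ are $\tau-1,\tau-2,\dots,1$.

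First step: for any neighborhood $U$ of $0$, I will show there is $N=N(U)$ such that every point of $I_1$ with escape time $m$ lies in $U$ whenever $m>N$. Let $J_m$ be the set of points of $I_1$ with escape time exactly $m$. These are the intervals $f|_{I_1}^{-(m-1)}([q,1))$ pulled back inside $I_1$, so they accumulate only at $0$ and shrink to $0$. The reason is that $f|_{I_1}^{-1}$ is a contraction toward $0$ in the topological sense: conjugacy to the doubling map gives that the nested preimages $[0,f|_{I_1}^{-k}(q))$ shrink to $\{0\}$, because $f|_{I_1}^{-k}(q)$ decreases to a fixed point, and the only fixed point is $0$. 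Hence within one excursion of length $\tau$, at most $N+1$ of its points lie outside $U$ (the first point plus those with escape time $\le N$).

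Second step: Cesàro counting. Fix $T$ from the definition of $E$ with $\tau_{i+1}>\tau_i$ for $i\ge T$; then $\tau_i\ge i-T$. Let $S_k=\sum_{i<k}\tau_i$, which is the time of the $k$-th return. Up to time $S_k$, the number of visits outside $U$ is at most $S_T+(k-T)(N+1)$, while $S_k\ge \sum_{i=T}^{k-1}(i-T)\sim k^2/2$. For a general time $n$ with $S_k\le n<S_{k+1}$, the count outside $U$ is at most $S_T+(k+1-T)(N+1)=O(k)$, and $n\ge S_k\gtrsim k^2/2$. So the frequency of visits outside $U$ is $O(1/k)\to0$. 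Since $U$ is arbitrary, the empirical measures converge weakly to $\delta_0$; by portmanteau, mass outside every neighborhood tending to zero suffices. Hence $x\in B_{\delta_0}$.

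The main obstacle is the first step, that is, justifying that escape time large forces proximity to $0$ using only topological information from membership in $\mathcal D$. I would argue it via the monotone sequence $f|_{I_1}^{-k}(q)$ converging to a fixed point of $f$, which must be $p=0$ since $f\in\mathcal D$ has a unique fixed point. The other ingredient, strict increase, is used only to get quadratic growth of $S_k$; in fact $\tau_i\to\infty$ alone would already suffice, since it gives $k=o(S_k)$.
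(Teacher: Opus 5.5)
Your argument is correct and is essentially the paper's: each excursion between consecutive returns to $I_2$ contains at most $N(U)+1$ points outside $U$, and $\tau_i(x)\to\infty$ makes these negligible in the Ces\`aro average. You also make explicit two points the paper glosses over, namely why $N(U)$ exists (the decreasing sequence $f|_{I_1}^{-k}(q)$ must converge to the unique fixed point $0$) and the bookkeeping for times inside a partial excursion; there is one harmless off-by-one slip, since in the paper's indexing $\tau=n$ on $K_n$, not $n+1$.
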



We introduce some tools to estimate the Lebesgue measure of $E$.
Given $F \in \mathcal{F}$ defined on $I_2$, 
for each branch domain $K_n$, we define the following set:
\begin{equation}
    \label{eqn_K_n_minus}
    K_n^- := \bigcup_{m \geq n+1} K_m = (q, z_{n+1}).
\end{equation}
Note that $K_n^-$ is the union of the branch domains to the left of $K_n$. We also define:
\begin{align}
    \label{eqn_L_n}
    L_n &:= \left\{x \in K_n : F(x) \in K_n^-\right\} \subset K_n.
\end{align}
Note that $L_n$ is a subinterval of $K_n$ that shares its left endpoint.
We pay particular attention to maps $F$ such that $|L_n|/|K_n| \to 1$ fast enough:
\begin{defi}
\label{defi_F_star}
    $\mathcal{F}_*$ is the set of maps in $\mathcal{F}$ 
    such that there exists a monotonically increasing sequence $(p_n)$ of positive numbers such that:
    ${|L_n|}/{|K_n|} \geq p_{n}$ and $\prod_{n \geq 1} p_n > 0$.
    Furthermore we define two subfamilies:
    \begin{align*}
        \mathcal{F}_*^{weak} &:= \{F \in \mathcal{F} : \forall n, f|_{L_n}: L_n \to K_n^- \text{ is convex.} \}\\
        \mathcal{F}_*^{strong} &:= \{F \in \mathcal{F} : \forall n, f|_{K_n}: K_n \to I_2 \text{ is convex.} \}
    \end{align*}
\end{defi}
The subfamilies $\mathcal{F}_*^{weak}$ and $\mathcal{F}_*^{strong}$ give us estimates on $|C|$ and $|E|$:

\begin{prop}
\label{prop_physical_0}
    Let $f \in \mathcal D$.
    Let $F$ be its first return map on $[q,1]$.
    \begin{enumerate}
        \item If $F \in \mathcal{F}_*^{weak}$, then $|E|>0$. 
        In particular by Proposition \ref{prop_C_in_basin}, $|B_{\delta_0}| > 0$.
        \item If $F \in \mathcal{F}_*^{strong}$, then $|E| = |I_2|$. 
        In particular by Proposition \ref{prop_C_in_basin}, $|B_{\delta_0}| = 1$.
    \end{enumerate}
\end{prop}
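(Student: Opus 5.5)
The plan is to show that $E$ contains the set of points whose forward $F$-orbit eventually stays inside $L:=\bigcup_n L_n$. Then I will bound the measure of that set from below using the convexity hypotheses together with a rearrangement (Chebyshev-type) inequality. I read the hypotheses $F\in\mathcal F_*^{weak}$ and $F\in\mathcal F_*^{strong}$ as including $F\in\mathcal F_*$, with the sequence $(p_n)$ available.

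\textbf{Step 1: return times along orbits in $L$.} For the first return map of $f\in\mathcal D$ to $I_2=[q,1]$, I first identify the return time on each branch. A point of $K_n$ is mapped by $f$ into $I_1$ close to $0$. It then stays in $I_1$ for a number of steps that grows with $n$, because points closer to $q$ land closer to the fixed point $0$. The natural labelling (which I expect the construction of $\mathcal F$ to fix) gives $\tau|_{K_n}=n$, or at least $\tau$ strictly increasing in $n$. If $F^i(x)\in L_{n_i}$, then $F^{i+1}(x)\in K_{n_i}^-$, so $n_{i+1}>n_i$. Hence every $x$ with $F^i(x)\in L$ for all $i\ge T$ lies in $E$. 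It therefore suffices to bound from below the measure of
\[
G_\infty:=\{x\in I_2: F^i(x)\in L\ \forall i\ge 0\}=\bigcap_N G_N,\qquad G_N:=\{x: F^i(x)\in L,\ 0\le i<N\}.
\]

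\textbf{Step 2 (weak case).} Set $P_m:=\prod_{k\ge m}p_k$. These numbers are positive, increase in $m$, and tend to $1$. I will prove by induction on $N$ that $|G_N\cap K_n|\ge P_n|K_n|$ for every $n$. Summing over $m>n$ then gives, for $G_N\cap K_n^-$, a density that is at least $P_m$ on $K_m$. This lower bound is larger on pieces further to the left, i.e. closer to $q$.

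For the inductive step, note that $G_{N+1}\cap K_n=(F|_{L_n})^{-1}(G_N\cap K_n^-)$. Since $F|_{L_n}:L_n\to K_n^-$ is convex and increasing, its inverse is concave. A concave increasing pullback assigns relatively more length to the left part of the target. Combining this with the leftward-increasing density profile, I expect a rearrangement argument to give
\[
\frac{|G_{N+1}\cap K_n|}{|L_n|}\ \ge\ \frac{|G_N\cap K_n^-|}{|K_n^-|}\ \ge\ P_{n+1}.
\]
The argument would compare the distribution functions of the two measures (pulled-back Lebesgue versus normalized Lebesgue on $K_n^-$), which are ordered by concavity, and integrate against a decreasing step function. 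Multiplying by $|L_n|/|K_n|\ge p_n$ gives $P_n$, closing the induction. Letting $N\to\infty$ yields $|G_\infty\cap K_1|\ge P_1|K_1|>0$, so $|E|>0$. Proposition~\ref{prop_C_in_basin} then gives $|B_{\delta_0}|>0$. This rearrangement inequality is the step I expect to be the main obstacle. The difficulty is that the density bound is only a lower bound on each $K_m$, not an exact monotone profile, so I will first replace $G_N$ within each $K_m$ by a subset of exact relative measure $P_m$ and apply the inequality to that subset.

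\textbf{Step 3 (strong case).} Convexity now holds on the whole branch $K_n\to I_2$. Compositions of increasing convex maps are convex, so for each cylinder $C$ of the partition $\{K_n\}$ of depth $k$, the map $F^k:C\to I_2$ is convex and onto. Applying the same rearrangement inequality with target the set $G_\infty$, whose density on $K_m$ is at least $P_m$ and hence increases to the left, gives
\[
|F^{-k}(G_\infty)\cap C|\ \ge\ P_1|C|.
\]
Since $F^{-k}(G_\infty)\subset E$, the set $E$ has relative density at least $P_1>0$ in every cylinder. Suppose $|I_2\setminus E|>0$. Because $F$ has a generating partition, cylinders shrink to points, and the Lebesgue density theorem (applied along the cylinders) produces a cylinder in which $I_2\setminus E$ has relative density greater than $1-P_1$. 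This contradicts the bound above, so $|E|=|I_2|$. By the full invariance of $E$, Proposition~\ref{prop_C_in_basin}, and the fact that every orbit of $f$ in $\mathbb S^1$ other than the fixed point enters $I_2$, we get $|B_{\delta_0}|=1$.
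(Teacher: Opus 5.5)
The paper's proof does not argue this directly. It imports $|C|\ge P\,|[q,1]|$ and $|E|=|[q,1]|$ from the proofs of Propositions 2.1 and 2.2 of \cite{Gun25}. Your outline is a reasonable self-contained reconstruction: Step 1, then a lower bound on $|G_\infty|=|C|$ by induction on $N$, then convex compositions and density points. However, the inductive step of Step 2 fails with the hypothesis you carry, and Step 3 inherits the same problem.

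You need $\int_{G_N\cap K_n^-}h'\ge P_{n+1}\int_{K_n^-}h'$, where $h=(F|_{L_n})^{-1}$ and $h'$ is decreasing. Comparing distribution functions lets you integrate the step profile $\sum_m P_m\mathbf{1}_{K_m}$ against $h'$. To pass from $\mathbf{1}_{G_N}$ to that profile, though, you need $\int_{G_N\cap K_m}h'\ge P_m\int_{K_m}h'$ on each piece. That depends on \emph{where} $G_N\cap K_m$ sits inside $K_m$, and the bound $|G_N\cap K_m|\ge P_m|K_m|$ does not record this.

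Here is a concrete failure. Suppose $G_N\cap K_{n+1}$ is the rightmost $P_{n+1}$-fraction of $K_{n+1}$, with left endpoint $u$, and $h'$ is a small perturbation of a multiple of $\mathbf{1}_{(q,u)}$. Then the ratio is essentially $|G_N\cap K_{n+1}^-|/(u-q)\le |K_{n+1}^-|/(u-q)$. This is $<P_{n+1}$ as soon as $|K_{n+1}^-|<P_{n+1}|K_{n+1}|$, which happens for instance in the geometry of Section~\ref{section_construction_full}. Replacing $G_N\cap K_m$ by a subset of exact relative measure $P_m$ changes the mass, not the position, so it does not repair this.

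What concavity actually propagates is an initial-segment bound, and that is the hypothesis to induct on:
\[
|G_N\cap(q,t)|\ \ge\ P_n\,(t-q)\qquad\text{for all } t\in(z_{n+1},z_n],\ n\ge 1 .
\]
The tool is the following. If $|A\cap(q,u)|\ge c\,(u-q)$ for all $u\le s$, and $w\ge 0$ is decreasing on $(q,s)$, then $\int_{A\cap(q,s)}w\ge c\int_q^s w$. To see this, write $w$ as a positive superposition of indicators $\mathbf{1}_{(q,u)}$ and use Fubini.

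Apply it with $A=G_N$, $w=h'$ and $c=P_{n+1}$. For $t\in L_n$ this gives $|G_{N+1}\cap[z_{n+1},t)|\ge P_{n+1}(t-z_{n+1})$. For $t$ to the right of $L_n$, use the structural fact your per-piece hypothesis discards: $G_{N+1}\cap K_n\subset L_n$ sits at the \emph{left} end of $K_n$. Hence $|G_{N+1}\cap[z_{n+1},t)|\ge P_{n+1}|L_n|\ge P_{n+1}p_n|K_n|=P_n|K_n|$.

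The same estimate at each level $m\ge n+1$ gives $|G_{N+1}\cap K_n^-|\ge P_{n+1}(z_{n+1}-q)$. Adding the two contributions closes the induction, because $P_{n+1}\ge P_n$.

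Letting $N\to\infty$ yields $|C|\ge P_1|I_2|$. It also yields $|G_\infty\cap(q,u)|\ge P_1(u-q)$ for every $u$. This is exactly what the same tool needs in Step 3, applied to the concave inverse of $F^k|_C$, to give $|F^{-k}(G_\infty)\cap C|\ge P_1|C|$. From there your density-point argument goes through.
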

By Proposition \ref{prop_physical_0}, the following 
Propositions \ref{prop_result_smooth} and \ref{prop_result_full} 
are now sufficient to prove our main Theorems \ref{thm_smooth} and \ref{thm_full} respectively.

\begin{prop}
\label{prop_result_smooth}
    There exists $f \in \mathcal{D} \cap C^\infty(\mathbb{S}^1)$ such that $f'(p) > 1$, and the first return map $F$ on $[q,1]$ is an element of $\mathcal{F}_*^{weak}$. 
\end{prop}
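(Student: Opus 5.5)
The construction is explicit: choose the two branches $f_1 := f|_{I_1}$ and $f_2 := f|_{I_2}$ so that the first return map $F$ on $I_2=[q,1)$ can be written in closed form on every $K_n$. Fix $\lambda>1$ and let $f_1(x)=\lambda x$ on a neighbourhood $[0,\delta]$ of $p=0$, so $f'(p)=\lambda>1$. Define $w_1=q$ and $w_{n+1}=f_1^{-1}(w_n)$, which decrease to $0$ geometrically, and set $J_n=[w_{n+1},w_n)$. Let $g:=f_1|_{J_1}:J_1\to[q,1)$.

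A point of $I_2$ first returns at time $n$ exactly when $f_2(x)\in J_{n-1}$ (with $J_0:=I_2$). Hence
\[
K_n=f_2^{-1}(J_{n-1}), \qquad F|_{K_n}=g\circ f_1^{\,n-2}\circ f_2 ,
\]
and for $n$ large $f_1^{n-2}|_{J_{n-1}}$ is the affine map $u\mapsto\lambda^{n-2}u$. So all the $n$-dependence of the shape of $F|_{K_n}$ comes from $f_2|_{K_n}$. The requirements on $g$ are of fixed size:
\begin{itemize}
\item[(a)] $g$ is convex and increasing on a fixed left portion of $J_1$ that is mapped onto $[q,q+\eta]$;
\item[(b)] near $q^-$, $g$ tends to $1$ flatly, so that $f$ is $C^\infty$ at $q$ with all one-sided derivatives zero, matching (c).
\end{itemize}

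The real freedom is $f_2$, and the key point is that the targets $J_{n-1}$ have length of order $\lambda^{-n}$, which is super-polynomially small. Choose points $z_n\downarrow q$ with $z_n-q$ decaying polynomially (say $\sim 1/n$), so that $|K_n|\sim n^{-2}$. On each $K_n=[z_{n+1},z_n)$ let $f_2$ rise from $w_n$ to $w_{n-1}$ as follows:
\begin{itemize}
\item[(i)] on a left piece of relative length $1-\epsilon_n$ it climbs convexly and very slowly, covering only the portion $J_{n-1}\cap \lambda^{-(n-2)}g^{-1}(q,z_{n+1})$;
\item[(ii)] on the remaining piece of length $\epsilon_n|K_n|$, with $\sum\epsilon_n<\infty$ (e.g. $\epsilon_n=n^{-2}$), it covers the rest of $J_{n-1}$, glued smoothly between consecutive $K_n$.
\end{itemize}
The $k$-th derivative of such a bump is of order $\lambda^{-n}(\epsilon_n|K_n|)^{-k}$, which is at most $\lambda^{-n}\,\mathrm{poly}_k(n)\to0$. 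Hence
\begin{itemize}
\item[(c)] $f_2$ is $C^\infty$ on $[q,1)$ and flat at $q$ from the right, and $f_2$ away from $q$ and $g$ can be joined to smooth, orientation-preserving branches with $f_1(0)=0$ and $f_2(1^-)=1$.
\end{itemize}
With $p_n:=1-\epsilon_n$ we get $|L_n|/|K_n|\ge p_n$, with $(p_n)$ monotone and $\prod p_n>0$. Moreover, on $L_n$ the map $F=g\circ(\lambda^{n-2}\cdot)\circ f_2$ is a composition of increasing convex maps, hence convex, which is the $\mathcal{F}_*^{weak}$ condition. The finitely many small $n$, where $f_1^{n-2}$ is not yet affine, are handled by choosing $f_1$ affine on $J_1\cup\cdots\cup J_N$ and treating $K_1,\dots,K_N$ by hand.

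It remains to check $f\in\mathcal D$, and I expect this to be the main obstacle, together with the fact that $F$ has a generating partition (so that $F\in\mathcal F$). The map $f$ is a degree-two orientation-preserving smooth covering with the single flat critical point $q$, so it is not uniformly expanding. To get conjugacy to the doubling map it suffices that the dyadic cylinders of $\mathcal P$ shrink to points, i.e.\ that $f$ has no wandering intervals and no periodic attracting intervals. My first attempt:
\begin{itemize}
\item Choose $f'\ge\lambda>1$ outside a small neighbourhood of $q$.
\item An interval whose iterates all avoid this neighbourhood then grows exponentially.
\item An interval that keeps returning near $q$ falls into some $K_n$ and is mapped by $F$ onto a piece of $I_2$ whose length is bounded below in terms of $n$, via the explicit formula for $F|_{K_n}$.
\item An interval containing a preimage of $q$ is cut by it into pieces, and the argument is applied to each piece.
\end{itemize}
If this direct argument is awkward, I would instead impose a mild Koebe/negative-Schwarzian-type condition on each branch away from $q$ and invoke the standard absence of wandering intervals for such coverings.
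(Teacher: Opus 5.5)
Your smoothness argument is sound, and it is a genuinely different route from the paper's. By prescribing $f_2$ directly, $C^\infty$ regularity on $(q,1)$ is automatic, and flatness at $q^+$ follows from $|J_{n-1}|\sim\lambda^{-n}$ against polynomial scales in $K_n$. The paper instead prescribes $F$ and needs the adjusted gluing lemma plus the Fa\`a di Bruno estimates of Lemma \ref{lem_glueflat}. (Small fix: $f_1^{n-2}|_{J_{n-1}}$ equals $u\mapsto\lambda^{n-2}u$ only if $f_1$ is affine on all of $[0,w_2]$, like the paper's $b$; it cannot be affine on $J_1$, where it must be flat at $q$.) Convexity of $F$ on $L_n$, $\prod p_n>0$ and $f'(p)=\lambda>1$ are also fine.

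The genuine gap is the generating partition. It is part of the statement, since $\mathcal{F}_*^{weak}\subset\mathcal{F}$ and $f\in\mathcal{D}$ are required, and you leave it open.

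\emph{Your direct sketch fails at its third bullet.} Since $F|_{K_n}=g\circ(\lambda^{n-2}\,\cdot)\circ f_2$ with $g$ flat at $q^-$, we have $F'\to0$ at $z_n^-$. So short intervals near $z_n$ are contracted by arbitrarily large factors, and $|F(T)|$ admits no lower bound ``in terms of $n$''. Nor do your conditions give $F'>1$ on $L_n$: a convex $f_2$ may leave $z_{n+1}$ with tiny slope.

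\emph{The fallback fails too.} Theorems excluding wandering intervals need non-flat critical points, and flat ones are exactly where $C^\infty$ wandering intervals exist (Hall's $C^\infty$ Denjoy counterexample). And $Sf<0$ cannot be imposed, because it forbids positive local minima of $f'$. Your $f_2'$ is of order $\lambda^{-n}n$ on $L_n$ but of order $\lambda^{-n}/(\epsilon_n|K_n|)$ on piece (ii), so it has a positive local minimum near every $z_{n+1}$. This unbounded distortion is forced by $|L_n|/|K_n|\to1$ and is the mechanism of the construction, so no Koebe-type control is available.

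The missing idea is that the contraction near $z_n^-$ is paid back during the subsequent stay near the fixed point $1\equiv p$, and concavity quantifies this. Impose, as the paper does, that $F|_{K_1}(x)=1-\lambda(1-x)$ (i.e.\ $f_2$ is affine of slope $\lambda$ on $K_1$). Also impose that for $n\ge2$, $F'\ge\mu>1$ on $K_n$ except on a terminal interval $[z_n-d_n,z_n)$ where $F$ is concave, with $d_n<|K_1|/2$. Let $G$ be the first return map of $F$ to $\Delta=\bigcup_{n\ge2}K_n$, and take $z\in[z_n-d_n,z_n)$ with $G(z)=F^j(z)$. Concavity and $F(z_n^-)=1$ give $F'(z)\ge(1-F(z))/(z_n-z)$. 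Moreover $G(z)=1-\lambda^{j-1}(1-F(z))\in\Delta$, so $1-G(z)>1-z_2=|K_1|$. Therefore
\[
G'(z)=\lambda^{j-1}F'(z)\ge\frac{\lambda^{j-1}\bigl(1-F(z)\bigr)}{z_n-z}=\frac{1-G(z)}{z_n-z}\ge\frac{|K_1|}{d_n}\ge 2,
\]
while elsewhere $G'\ge F'\ge\mu$. Hence $G$ is uniformly expanding and has a generating partition. Lemma \ref{lem_generating_f}, applied twice, passes this to $F$ and then to $f$; this is Lemma \ref{lem_GP_smooth}.

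In your $f_2$-first setting these conditions can be arranged without losing your free derivative bounds. For example, take $g$ affine near $w_2$, convex up to some $c_0$ and concave on $[c_0,q)$. Take $f_2$ affine on $K_1$ and on each $L_n$. On piece (ii), make $f_2$ convex until $\lambda^{n-2}f_2$ reaches $c_0$ and concave afterwards, so that $d_n\le\epsilon_n|K_n|$. But these conditions must be imposed and the argument above supplied. As written, the proposal does not prove $f\in\mathcal{D}$ or $F\in\mathcal{F}$.
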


\begin{prop}
\label{prop_result_full}
    There exists $f \in \mathcal{D}$ such that $f'(p) > 1$, 
    $f \in C^\infty(\mathbb{S}^1 \setminus \{q\})$
    and the first return map $F$ on $[q,1]$ is an element of $\mathcal{F}_*^{strong}$
\end{prop}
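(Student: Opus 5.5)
We construct $f$ by hand as two convex branches, $f_1=f|_{I_1}$ and $f_2=f|_{I_2}$, glued smoothly at $p$. Convexity of every branch of the first return map $F$ then follows automatically, because a composition of increasing convex maps is convex. All the work goes into a near-$q$ profile of $f_2$ that forces $|L_n|/|K_n|\to 1$ summably fast.

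\emph{The branches away from $q$.} Fix $\lambda>1$ and $q\in(0,1)$ with $\lambda q<1\le \lambda(1-q)$, for instance $\lambda=3$, $q=0.3$. Let $f_1:[0,q]\to[0,1]$ be $C^\infty$, increasing and convex, with $f_1(0)=0$, $f_1(q)=1$, and $f_1(x)=\lambda x$ near $0$. This is possible since $\lambda q<1$, and convexity gives $f_1'\ge\lambda>1$ on $I_1$. Let $f_2:[q,1]\to[0,1]$ be increasing and convex, with $f_2(q)=0$, $f_2(x)=1-\lambda(1-x)$ near $1$, and $C^\infty$ on $(q,1]$. This is compatible with convexity because $1-\lambda(1-q)\le 0$. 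Since both branches are affine of slope $\lambda$ at $p$, $f$ is $C^\infty$ at $p$ with $f'(p)=\lambda>1$. At $q$ we only ask for continuity, and smoothness at $q$ is exactly what we sacrifice.

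\emph{Structure of $F$.} Put $w_0=1$, $w_1=q$, $w_{k+1}=f_1^{-1}(w_k)\downarrow 0$. Points of $[w_k,w_{k-1})$ need exactly $k$ further iterates to enter $I_2$. Hence $z_n=f_2^{-1}(w_{n-1})$, and on $K_n$ the return map is $F=f_1^{\,n-1}\circ f_2$, a composition of increasing convex maps. Therefore $F|_{K_n}$ is convex; this is the strong condition (reading the definition as requiring the composed branch to be convex).

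\emph{The product condition.} Write $J_n=f_2(K_n)=[w_n,w_{n-1})$. The map $f_1^{\,n-1}:J_n\to I_2$ is convex, so the preimage of the right part $[z_{n+1},1)$ occupies a fraction of $J_n$ at most $c:=(1-z_2)/(1-q)<1$, uniformly in $n$. Thus $K_n\setminus L_n=f_2^{-1}(J_n^{+})$, where $J_n^{+}$ is the right $c$-fraction of $J_n$.

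We now prescribe $f_2'$ on $(q,1]$ directly, as a smooth increasing positive function, rather than through a closed formula. On each $K_n$ we require $f_2'$ to be at most $\varepsilon_n\,|J_n|/|K_n|$ on the left $(1-\delta_n)$-fraction of $K_n$, with the remaining increase concentrated in the right $\delta_n$-fraction. Here $\varepsilon_n\le(1-c)$ and $\delta_n\downarrow 0$ is summable.

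Then $f_2$ of the left $(1-\delta_n)$-part has length at most $\varepsilon_n|J_n|\le(1-c)|J_n|$, so it lies inside $J_n\setminus J_n^{+}$. This gives $|K_n\setminus L_n|\le\delta_n|K_n|$, so we may take $p_n=1-\delta_n$, which is monotone with $\prod p_n>0$. Note that $f_2'$ is continuous across each $z_n$ and increasing, but its total growth is the infinite product of the per-branch jump factors. This forces $f_2'\to 0$ extremely fast at $q$, which is harmless since no regularity is required there.

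I expect the main technical step to be realizing these interval-by-interval constraints simultaneously. We need a single increasing $C^\infty$ function $f_2'$ on $(q,1]$ whose integrals over the $K_n$ hit the lengths $|J_n|$ exactly, so that the $z_n$ are consistent, and which is linear near $1$. I would first choose the $z_n$ freely, for example geometrically, and then build $f_2'$ on each $K_n$ as a smooth monotone ramp from a tiny plateau to a steep end, matched at the $z_n$ by flat gluing.

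The remaining obstacle is membership in $\mathcal D$, i.e.\ that the partition is generating, since $f_2$ is not expanding near $q$. I would argue that a homterval cannot exist. Its orbit would avoid the preimages of $p$, and every return passes through $I_1$, where $f_1'\ge\lambda$. Moreover the branch $f_1^{\,n-1}$ expands $J_n$ onto $I_2$ with derivative at least $\lambda^{n-1}$ in total. Combining this with convexity of $F$ on branches, nested cylinders of $F$ should shrink, because on any subinterval of $K_n$ the convex map $F$ has derivative bounded below away from the far left end. A homterval accumulating only at left ends would have to shadow $q$, and hence lie in $E$-type behaviour, which is impossible for an interval of positive length under an orientation-preserving degree-two map.
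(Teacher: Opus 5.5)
There is a genuine gap, and it is in the step you called technical: no increasing $f_2'$ with your per-branch profile exists, for any choice of the $z_n$. Put $r_n:=|J_n|/|K_n|$.

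You require $f_2'\le\varepsilon_n r_n$ on the left $(1-\delta_n)$-fraction of $K_n$, while $\int_{K_n}f_2'=|J_n|=r_n|K_n|$. So the right $\delta_n$-fraction must carry at least $(1-\varepsilon_n)r_n|K_n|$, which forces $f_2'(z_n)\ge(1-\varepsilon_n)r_n/\delta_n$. But $z_n$ is the left endpoint of $K_{n-1}$ and $f_2'$ is increasing, so $f_2'(z_n)\le\varepsilon_{n-1}r_{n-1}$. Hence $r_n\le \delta_n\varepsilon_{n-1}r_{n-1}/(1-\varepsilon_n)\le \delta_n r_{n-1}/c$. Since $\delta_n\to0$, $r_n$ decays faster than any exponential.

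On the other hand $r_n\ge|J_n|/(1-q)$, and $|J_n|=w_{n-1}-w_n$ is fixed by $f_1$ alone. Because $f_1(x)=\lambda x$ near $0$, $|J_n|\asymp\lambda^{-n}$, which is a contradiction. So your remark that $f_2'\to0$ extremely fast is harmless is exactly where the argument breaks. The obstruction is not regularity at $q$ but the mass constraint $\int_{K_n}f_2'=|J_n|$ with $|K_n|\le 1$.

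Re-tuning the profile does not help. Set $s_n:=1-|L_n|/|K_n|$ and $\rho_n:=|f_2(L_n)|/|J_n|$. Comparing the average slopes of the convex $f_2$ on $K_{n+1}\setminus L_{n+1}$ and on $L_n$ gives $r_{n+1}\le r_n\,\rho_n s_{n+1}/\bigl((1-\rho_{n+1})(1-s_n)\bigr)$. Moreover $\rho_n\to0$ by bounded distortion of $f_1^{n-1}$ on $J_n$. So global convexity of $f_2$, the device that gave you convex branches of $F$ for free, is incompatible with $F\in\mathcal{F}_*$. Your uniform constant $c$ is also wrong. The convexity bound for the preimage of $[z_{n+1},1)$ is $(1-z_{n+1})/(1-q)\to1$, not $(1-z_2)/(1-q)$, and in fact you would need $\varepsilon_n\le\rho_n\to0$.

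The underlying issue is where the distortion comes from. If $f_2$ is $C^1$ at $z_n$, as $f\in C^\infty(\mathbb{S}^1\setminus\{q\})$ requires, apply the chain rule to $F=f_1^{n-1}\circ f_2$ on $K_n$ and $F=f_1^{n-2}\circ f_2$ on $K_{n-1}$. This gives $F'(z_n^-)=f_1'(q^-)\,F'(z_n^+)$. For convex branches, $F'(z_n^-)\ge(1-z_2)/(s_n|K_n|)$ and $F'(z_n^+)\le(z_n-q)/|L_{n-1}|$. If $f_1'(q^-)<\infty$, as for your $f_1\in C^\infty([0,q])$, then using $|L_{n-1}|\ge|K_{n-1}|/2$ you get $|K_{n-1}|\le\eta_n|K_n|$ with $\eta_n\to0$. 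This contradicts $\sum_n|K_n|<\infty$. The infinite derivative must therefore sit in $f_1$ at $q^-$, not in a super-flat $f_2$ at $q^+$.

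This is the paper's route. It takes $f_1$ convex with $f_1'(x)\to\infty$ as $x\to q^-$, and prescribes $F$ rather than $f_2$. Each branch is affine from $L_n$ onto $K_n^-$ and equal to the rescaled copy $f_1(q+m_{n-1}(x-z_n))$ on a tiny right interval $R_n$. In between it is interpolated convexly, which works for small $\delta_n$ precisely because $f_1'(q^-)=\infty$. Then $f_2=f_1^{-(n-1)}\circ F|_{K_n}$ is recovered by Proposition~\ref{prop_finite_realization}, and it is smooth on $(q,1)$ by Lemma~\ref{lem_adjusted_gluing}. The resulting $f_2$ is far from convex: it is concave on each $L_n$ and $R_n$.

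Finally, your homterval paragraph is not yet an argument. The paper obtains the generating partition from uniform expansion of $F$.
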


A key ingredient for the proofs of Proposition \ref{prop_result_smooth} and \ref{prop_result_full} is
the realization method given by Proposition \ref{prop_finite_realization}, which shows how to find a map $f \in \mathcal {D}$
that has a given map $F \in \mathcal{F}'$ as its first return map.

We briefly outline the contents of the remaining sections.
In Section \ref{section_dynamics} we prove Proposition \ref{prop_C_in_basin}.
In Section \ref{section_dynamics_F} we prove Proposition \ref{prop_physical_0}.
In Section \ref{section_realization} we give a number of "realization-type" results which specify conditions on a pair $f_1:[0,q) \to [0,1)$ and $F \in \mathcal{F}'$ to guarantee the existence of $f_2$ which satisfies certain properties.
In Sections \ref{section_construction_smooth} and \ref{section_construction_full} 
we do the highly nontrivial construction of maps which satisfy the conditions of Section \ref{section_realization} to prove Propositions \ref{prop_result_smooth} and \ref{prop_result_full}.
In Section \ref{section_flat} we make remarks on the role of a flat critical point that appears in our results.

\section{Induced Dynamics}
\label{section_dynamics}
In this section we prove Proposition \ref{prop_C_in_basin}, which shows how the dynamics of the return time by the induced map can determine the empirical measure.
Fix a map $f \in \mathcal{D}$ and let $F \in \mathcal{F}$ be its first return map on $I_2$.
Recall the sequence of return times $\{\tau_i(x)\}_{i \geq 0}$, and the set of points with eventually strictly increasing return times $E$ as defined in (\ref{eqn_E}).
We define the sequence of cumulative return times.
$\forall i \geq 1, R_i(x) := \sum_{0 \leq j < i} \tau_j(x)$
For consistency, we define $R_0 := 0$.

\begin{proof}[Proof of Proposition \ref{prop_C_in_basin}] 
    Fix a point $x \in E$. 
    It is sufficient to prove that for any interval neighborhood $U$ of $0$, the proportion of visits to $U$ tends to 1, that is:
    \begin{align*}
        \lim_{J \to \infty} \frac{\#\{0\leq j < J: x_j \in U\}}{J} = 1.
    \end{align*}
    Now fix an interval neighborhood $U$ of $0$. Let $r$ be the right endpoint of $U$.
    Let $N$ be the smallest integer such that $f^N(r) \in I_2$.
    Because $f$ is an orientation preserving homeomorphism, points outside of $U$ must visit $I_2$ in time $N$ or less.
    Thus for the range of time between $R_i(x)$ and $R_{i+1}(x)$, the orbit of $x_j$ must spend at least $\tau_i(x) - N$ time inside $U$.
    Because $x \in E$, for $i$ sufficiently large, we have $\tau_i(x) >> N$.
    Therefore as $J \to \infty$, the proportion of times that $x_j$ spends in $U$ tends to $1$.
\end{proof}

\section{Dynamics of F}
\label{section_dynamics_F}
In this section we estimate the Lebesgue measure of $E$ to prove Proposition \ref{prop_physical_0}.
Let $f \in \mathcal{D}$, and $F \in \mathcal{F}$ be its first return map on $I_2$.
Recall the definition of $E$ in (\ref{eqn_E}) and the left subintervals $L_n \subset K_n$ in (\ref{eqn_L_n}).
Recall also that the partition $\mathcal{P}^F = \{K_n\}_{n \geq 1}$ is defined by first return time $\tau$ (i.e. $x \in K_n \iff \tau(x) = n$).
Therefore, we obtain the following expression for $E$ in terms of $L_n$:
\begin{equation*}
    E = \{x \in I_2| \exists T \geq 0: F^i(x) \in \bigcup_{m \geq 1} L_m \text{ for all } i \geq T\}.
\end{equation*}
Note that this new expression allows us to extend the definition of $E$ for general maps $F \in \mathcal{F}'$ without specifying that it is the first return map of some $f \in \mathcal {D}$.
Now we define a family of related subsets of $I_2$:
\begin{align*}
C &:= \{x \in I_2| F^i(x) \in \bigcup_{m \geq 1} L_m \text{ for all } i \geq 0\}.\\
C_n &:= \{x \in I_2| F^i(x) \in \bigcup_{m \geq 1} L_m \text{ for all } 
i \text{ such that } 0 \leq i < n\}.
\end{align*}
By definition, $\bigcap_{n \geq 1} C_n = C \subset E$.
With these definitions, Proposition \ref{prop_physical_0} is essentially proven in \cite{Gun25}.

\begin{proof}[Proof of Proposition \ref{prop_physical_0}]
    We prove the first item. Let $F \in \mathcal{F}_*^{weak}$. 
    By the proof of Proposition 2.1. of \cite{Gun25}, we have 
     $|E| > |C| \geq |[q,1]| P > 0$. 
    By Proposition \ref{prop_C_in_basin}, $|B_{\delta_0}| \geq |E| > 0$.
    
    Now we prove the second item. Let $F \in \mathcal{F}_*^{strong}$.
    By the proof of Proposition 2.2. of \cite{Gun25}, 
    we have $|E| = |[q,1]|$.
    Thus $E$ is a subset of $[q,1]$ with full Lebesgue Measure.
    Because $f \in \mathcal{D}$ and $E \subset B_{\delta_0}$, 
    we obtain $|B_{\delta_0}| = 1$.

    
\end{proof}

\section{Realization of Induced Maps}
\label{section_realization}
In this section we state several results, which show us how to find a 
two-branched map $f:[0,1) \to [0,1)$ that has certain properties
by choosing its first branch and first return map.
The following subsections will contain the proofs of these results.
For the rest of this section,
let $f_1 : [0,q) \to [0,1)$ be an orientation preserving nonsingular homeomorphism,
and $F \in \mathcal F'$ be defined on $[q,1]$. 
\begin{prop}[Realization Method]
\label{prop_finite_realization}
    There exists a unique homeomorphism $f_2 : [q,1) \to [0,1)$,
    such that the full branch map $f: [0,1) \to [0,1)$ defined by $(f_1,f_2)$ has first return map $F$ on $[q,1]$. 
\end{prop}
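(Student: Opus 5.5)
We construct $f_2$ branch by branch on the intervals $K_n$, using that the first return time on $K_n$ equals $n$. For $x \in K_n$, the required orbit is $x \mapsto f_2(x) \in I_1 \mapsto f_1(f_2(x)) \in I_1 \mapsto \dots$. It must stay in $I_1=[0,q)$ for exactly $n-1$ steps and then land at $F(x)\in[q,1]$. Since $f_1$ is an orientation preserving homeomorphism of $[0,q)$ onto $[0,1)$, the inverse branch $g:=f_1^{-1}:[0,1)\to[0,q)$ is a well-defined increasing homeomorphism onto its image. We are therefore forced to set
\begin{equation*}
f_2(x) := g^{\,n-1}(F(x)) \qquad \text{for } x \in K_n .
\end{equation*}
Here $F(x)\in[q,1)$ when $x$ is in the interior of $K_n$, and we read $F(z_n^-)=1$ as the limit. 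This formula also gives uniqueness at once. If $f$ has first return map $F$, the point $x\in K_n$ has return time $n$, so $f^{n}(x)=F(x)$ with $f(x),\dots,f^{n-1}(x)\in I_1$. Hence $F(x)=f_1^{n-1}(f_2(x))$, which forces the formula above.

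For existence we must check three things. First, $f_2$ is a homeomorphism $[q,1)\to[0,1)$. Second, the orbit of $f_2(x)$ under $f_1$ stays in $I_1$ for exactly $n-1$ steps. Third, $f_2(q)=0$, as required for $f$ to be a full branch map with the given partition.

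\textbf{Structure of the preimage intervals.} Define $J_1:=[q,1)$ and $J_k:=g^{k-1}([q,1))$ for $k\ge 2$. Since $g$ is an increasing homeomorphism of $[0,1)$ onto $[0,q)$ fixing $0$, the sets $J_k$ for $k\ge2$ are the successive intervals $[g^{k-1}(q),g^{k-2}(q))$. These are disjoint, adjacent, and ordered right to left, with $g^{k}(q)\downarrow 0$. The last claim holds because $g(y)<y$ for $y\in(0,1)$ (as $f_1(y)>y$ there) and $0$ is the only fixed point, so monotone iteration converges to it. Consequently the points of $J_k$ for $k\ge2$ are exactly the points of $(0,q)$ whose $f_1$-orbit first enters $[q,1)$ after $k-1$ steps. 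On $K_n$, the map $x\mapsto g^{n-1}(F(x))$ is then an increasing homeomorphism of $K_n=[z_{n+1},z_n)$ onto $J_{n}$ for $n\ge 2$. For $n=1$ the image is $[q,1)$ itself. However, $f_2$ must map into $[0,1)$ and its image must be disjoint from $I_2$ except on $K_1$, and we must reconcile this with the ordering.

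\textbf{Main obstacle: matching the orientation.} The $K_n$ are ordered right to left, with $K_1$ rightmost, while the images $J_n$ are also ordered right to left, with $J_1=[q,1)$ rightmost. So $f_2$ maps $[z_{n+1},z_n)$ onto $J_n$ increasingly and glues continuously: at $z_{n}$ both sides give $g^{n-1}(1^-)=g^{n-2}(q)$, using $F(z_{n+1})=q$ and $F(z_n^-)=1$. As $n\to\infty$, $z_n\to q$ and $J_n\to 0$, so setting $f_2(q)=0$ makes $f_2$ continuous at $q$. The result is a continuous increasing bijection $[q,1)\to[0,1)$, hence a homeomorphism.

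I expect this step to need the most care. It requires verifying the endpoint conventions of $F\in\mathcal F'$, namely $F(z_{n+1})=q$ and left limits equal to $1$, and making sure the return to $I_2$ at time $n$ is really the first one. The first-return property follows from the characterization of $J_k$: for $x\in K_n$, the point $f(x)$ lies in $J_{n}$ with $n\ge2$, so $f^{j}(x)\in I_1$ for $1\le j\le n-1$ and $f^n(x)=F(x)\in I_2$. The case $n=1$ is trivial, since there $f_2=F$ on $K_1$.
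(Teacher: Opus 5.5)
Your proposal is correct and is essentially the paper's proof: $f_2|_{K_n}$ is forced to equal $f_1^{-(n-1)}\circ F|_{K_n}$, which maps $K_n$ increasingly onto $J_n=f_1^{-(n-1)}([q,1))$, and the pieces glue into a homeomorphism with $f_2(q)=0$ because the $J_n$ are adjacent and accumulate only at $0$. One caveat, which the paper shares when it asserts without proof that the $J_n$ partition $(0,1)$: your claim $f_1(y)>y$ on $(0,q)$ does not follow from $f_1$ being an orientation-preserving homeomorphism of $[0,q)$ onto $[0,1)$ --- a fixed point $c\in(0,q)$ would make the $J_n$ accumulate at $c$, and then no such $f_2$ would exist --- so it is really an extra hypothesis, which holds in the paper's constructions because $f_1$ is affine with slope $m>1$ on $[0,b]$ and exceeds $q$ on $(b,q)$.
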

We give a sufficient condition such that $f \in \mathcal{D}$:
\begin{lem}
    \label{lem_generating_f_realized}
    If $F \in \mathcal F$ (which means $F$ has a generating partition), 
    the unique $f$ from Proposition \ref{prop_finite_realization} 
    has a generating partition and thus $f \in \mathcal{D}$.
\end{lem}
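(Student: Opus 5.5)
The plan is to show that the partition $\mathcal P=\{I_1,I_2\}$ is generating for $f$. That is, for every $x \ne y$ there is $k\ge 0$ with $f^k(x)$ and $f^k(y)$ in different elements of $\mathcal P$; equivalently, the $f$-cylinders of all orders have lengths tending to zero. Once we have this, a standard argument gives a conjugacy to the doubling map. Send $x$ to its itinerary $(a_k)\in\{0,1\}^{\mathbb N}$ with respect to $\mathcal P$, and then to $\sum a_k 2^{-k-1}$. Generation makes this map injective. Since $f$ is an orientation-preserving full-branch map with both branches homeomorphisms onto $[0,1)$, the map is monotone, and every finite word is realized by a nondegenerate interval. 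The map therefore has dense image and no jumps, so it is a homeomorphism of the circle conjugating $f$ to $x\mapsto 2x \bmod 1$. Hence $f\in\mathcal D$.

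To prove generation, I will suppose for contradiction that there is a nondegenerate interval $J$ on which all iterates $f^k$ have a constant itinerary. Then $f^k|_J$ is a homeomorphism onto its image for every $k$, and no $f^k(J)$ contains $q$ in its interior. There are two cases.

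\emph{Case 1: the orbit of $J$ enters $I_2$.} Then some $f^{k_0}(J)$ has nonempty interior inside $I_2$. Replacing $J$ by $f^{k_0}(J)\cap I_2$, we may assume $J\subset I_2$. By Proposition \ref{prop_finite_realization}, the first return map of $f$ to $[q,1]$ is $F$, and each $F$-branch on $K_n$ is $f^{n}|_{K_n}$, whose $f$-itinerary is fixed. Since the $f$-itinerary is constant on $J$, the return times are constant along $J$. So $J$ lies in a single element of every $F$-refined partition $\bigvee_{i<m}F^{-i}\{K_n\}$, which contradicts $F\in\mathcal F$ having a generating partition.

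\emph{Case 2: the orbit of $J$ never meets the interior of $I_2$.} Then $f^k(J)\subset I_1$ for all $k$, so $J$ stays in $I_1$ under $f_1$ forever. Since $f_1:[0,q)\to[0,1)$ is an increasing homeomorphism with $f_1(0)=0$ and $q<1$, we have $f_1(x)>x$ for $x\in(0,q)$; otherwise there would be a fixed point in $(0,q)$, and $f_1^{-1}$ of that point would give a second preimage of the fixed point in $I_1$. Every point of $(0,q)$ therefore eventually leaves $I_1$: the increasing orbit cannot converge inside $(0,q]$, because a limit would be a fixed point of the continuous extension. A nondegenerate $J$ contains such points, so this case is impossible.

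I expect Case 1 to be the main obstacle. The care needed is in matching the $F$-cylinders to the $f$-cylinders exactly, including endpoint conventions, because the $K_n$ are half-open and $F(q)=1$ is a boundary convention. I would handle this by working with interiors of intervals and checking that the $f$-itinerary of a point of $K_n$ over times $0,\dots,n$ is $2,1,\dots,1,2$. With that check, equality of $f$-itineraries forces equality of $F$-itineraries.
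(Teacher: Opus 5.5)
Your argument is correct in outline, but it takes a different route from the paper, and one justification in Case 2 has to be replaced.

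\textbf{Comparison with the paper.} The paper deduces the lemma from the general Lemma \ref{lem_generating_f}, which concerns the first return to a union $\Delta$ of partition elements of a full branch map with countably many branches. Its proof \emph{pulls back} instead of pushing forward. Given $x_1,y_1$, it takes their preimages $x,y$ under a branch of $f$ whose domain lies in $\Delta$. It separates $x,y$ by some $J\in\mathcal P^F_M$, and uses that $F^M|_J=f^N|_J$ maps $J$ homeomorphically onto $\Delta$, so $J$ is a union of elements of $\mathcal P_{N+1}$. Since $F$ is defined on all of $\Delta$ (up to exceptional points) and every point has a preimage in $\Delta$, the pullback automatically puts every relevant orbit in $\Delta$ infinitely often. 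So nothing like your Case 2 is needed, and no monotonicity or one-dimensional structure is used. This generality is what lets the paper reuse the same lemma in Lemma \ref{lem_GP_smooth}, with $F$ itself in the role of $f$.

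Your push-forward argument on an interval of constant itinerary is natural for monotone circle maps, but it must treat orbits trapped in $I_1$ separately. In exchange, you actually prove the step ``generating $\Rightarrow f\in\mathcal D$'', which the paper only asserts. For injectivity of the binary-sum map you should also note that no itinerary ends in $1^\infty$. This holds because the cylinders $\{x: a_0=\dots=a_k=1\}$ have the form $[w_k,1)$, and generation forces them to shrink to the empty set.

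\textbf{The step to repair.} Your reason for $f_1(x)>x$ on $(0,q)$ (``a second preimage of the fixed point in $I_1$'') is circular. Uniqueness of the fixed point is a property of maps in $\mathcal D$, which is what you are proving, and a degree-two covering can have several fixed points. The correct source is Proposition \ref{prop_finite_realization} itself. There $f_2(K_n)=J_n=f_1^{-(n-1)}([q,1))$, and $f_2$ maps $(q,1)$ onto $(0,1)$. Hence the $J_n$ cover $(0,1)$, i.e.\ every point of $(0,q)$ eventually enters $I_2$ under $f_1$.

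You should invoke this fact in Case 1 as well. It guarantees that the orbit of your $J\subset I_2$ returns to $I_2$ infinitely often, so all $F$-iterates are defined on $J$ and the contradiction with generation of $\mathcal P^F$ is legitimate.
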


We give a sufficient condition on the pair $(f_1, F)$ such that $f_2$ is $C^\infty$ on $(q,1)$:
\begin{defi}[Adjusted First Return]
\label{defi_adjusted}
    A pair $(f_1,F)$ is adjusted if there exists 
    a sequence of positive real numbers $(m_n)_{n \geq 1}$ such that:
    \begin{enumerate}[label = (\roman*)]
        \item \label{item_left_adjusted} $\forall n\geq1$, there exists an interval
        $L_n = [z_{n+1}, z_{n+1} + \delta_n) \subset K_n$ 
        where $F$ is affine with slope $m_n$,
        which means $F|_{L_n}(x) = q + m_n (x-z_{n+1})$.
        \item \label{item_right_adjusted} $\forall n\geq2$, there exists an interval
        $R_n =  (z_n - \varepsilon_n, z_n) \subset K_n$
        where the graph of $F$ is the graph of $f_1$ around $q$, contracted horizontally by a factor of $m_{n-1}$,
        which means $F|_{R_n}(x) = f_1( q + m_{n-1}(x - z_n))$.
        \item \label{item_smooth_f1_adjusted} $f_1$ is a $C^\infty$ diffeomorphism.
        \item \label{item_smooth_adjusted} $F$ has $C^\infty$ branches.
    \end{enumerate}
\end{defi}

\begin{lem}[Adjusted Gluing Lemma]
\label{lem_adjusted_gluing}
    Let the pair $(f_1,F)$ be adjusted.
    Take the unique $f_2: [q,1) \to [q,1)$ given by Proposition \ref{prop_finite_realization}.
    Then $f_2$ is $C^\infty$ on $(q,1)$.
\end{lem}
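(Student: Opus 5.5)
The plan is to write $f_2$ explicitly on each branch domain $K_n$ in terms of $f_1$ and $F$. Then smoothness on the interior of each $K_n$ is immediate, and the only real work is at the junction points $z_n$, $n\ge 2$. There, conditions \ref{item_left_adjusted} and \ref{item_right_adjusted} of Definition \ref{defi_adjusted} should make the one-sided formulas agree with a single smooth function.

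\textbf{Step 1: an explicit formula for $f_2$.} Let $f=(f_1,f_2)$ be the map from Proposition \ref{prop_finite_realization}, and take $x\in K_n$, so that $\tau(x)=n$.
\begin{itemize}
\item If $n=1$, then $F(x)=f_2(x)$.
\item If $n\ge 2$, then $f_2(x)\in[0,q)$, and the orbit stays in $[0,q)$ for the next $n-2$ iterates of $f_1$ before landing in $[q,1)$. Hence $F(x)=f_1^{\,n-1}(f_2(x))$.
\end{itemize}
Since $f_1:[0,q)\to[0,1)$ is a bijection, $f_1^{-1}:[0,1)\to[0,q)$ is well defined, and we get
\begin{equation*}
f_2|_{K_n}=f_1^{-(n-1)}\circ F|_{K_n}.
\end{equation*}
This is consistent with the uniqueness in Proposition \ref{prop_finite_realization}; indeed it is presumably how that $f_2$ is constructed. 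By \ref{item_smooth_f1_adjusted}, $f_1^{-1}$ is $C^\infty$ on $[0,1)$. By \ref{item_smooth_adjusted}, $F|_{K_n}$ is $C^\infty$. Therefore $f_2$ is $C^\infty$ on the interior of every $K_n$.

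\textbf{Step 2: the junction points.} The points of $(q,1)$ not yet covered are the $z_n$ with $n\ge 2$. To the left of $z_n$ lies $R_n\subset K_n$, and to the right lies $L_{n-1}=[z_n,z_n+\delta_{n-1})\subset K_{n-1}$.

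For $x\in R_n$, condition \ref{item_right_adjusted} gives
\begin{equation*}
f_2(x)=f_1^{-(n-1)}\bigl(f_1(q+m_{n-1}(x-z_n))\bigr)=f_1^{-(n-2)}\bigl(q+m_{n-1}(x-z_n)\bigr).
\end{equation*}
For $x\in L_{n-1}$, condition \ref{item_left_adjusted} (with index $n-1$, so that $z_{(n-1)+1}=z_n$) gives
\begin{equation*}
f_2(x)=f_1^{-(n-2)}\bigl(q+m_{n-1}(x-z_n)\bigr).
\end{equation*}
So on $(z_n-\varepsilon_n,\,z_n+\delta_{n-1})$, $f_2$ coincides with the single function
\begin{equation*}
g_n(x)=f_1^{-(n-2)}\bigl(q+m_{n-1}(x-z_n)\bigr).
\end{equation*}

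\textbf{Step 3: smoothness of $g_n$.} This is the point needing a little care. As $x$ ranges over the interval above, $q+m_{n-1}(x-z_n)$ ranges over a small neighbourhood of $q$.
\begin{itemize}
\item Points slightly left of $q$ lie in $[0,q)$, which is a valid argument for $f_1^{-1}$ since $q>0$.
\item Points slightly right of $q$ lie in $[q,1)$, which is also in the domain $[0,1)$ of $f_1^{-1}$.
\end{itemize}
Moreover $f_1^{-1}$ takes values in $[0,q)\subset[0,1)$, so the $(n-2)$-fold composition is defined and $C^\infty$ on a full neighbourhood of $q$. (When $n=2$ it is simply the identity.) Hence $g_n$, and therefore $f_2$, is $C^\infty$ on a neighbourhood of $z_n$.

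Combining Steps 1 and 3, $f_2$ is $C^\infty$ on all of $(q,1)$. I expect no genuine obstacle here. The only subtle points are the index bookkeeping, namely that $R_n$ uses $m_{n-1}$, the same slope as $L_{n-1}$, and checking that $f_1^{-(n-2)}$ is smooth at $q$, an interior point of $[0,1)$.
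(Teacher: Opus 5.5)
Your proof is correct and is essentially the paper's argument. The paper uses its General Gluing Lemma to reduce the claim to checking that the pieces $\phi_{n,1}=F|_{K_n}$ and $\phi_{n+1,2}=f_1^{-1}\circ F|_{K_{n+1}}$ together form the single affine map $x\mapsto q+m_n(x-z_{n+1})$ near $z_{n+1}$; this is your Step 2 with the index shifted by one, before you apply $f_1^{-(n-2)}$. You inline that reduction instead of citing the lemma, and your Step 3 states explicitly what the paper leaves implicit: that $f_1^{-1}$ itself, and not only $f_1$, is smooth at the relevant interior points.
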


We give a sufficient condition on the pair $(f_1,F)$ to glue $f_1,f_2$ smoothly at $q$:

\begin{defi}[Gluing Flatly]
    \label{defi_glueflat}
    An adjusted pair $(f_1,F)$ glues flatly if it satisfies:
    \begin{enumerate}[label=(\roman*)]
        \item \label{item_flat_f1} $f_1$ extends as a $C^\infty$ diffeomorphism on $[0,q]$, such that $q$ is a flat critical point (i.e. $\forall n \geq 1, f^{(n)}(q) = 0$).
        \item \label{item_flat_m} There exists $b \in [0,q)$ 
        such that $f_1|_{[0,b]} : [0,b] \to [0,q]$ is affine with slope $m = q/b > 0$.
        Thus the backward orbit of $q$ satisfies
        $f_1^{-n}(q) = q m^{-n}$.
        \item \label{item_flat_endpoints} 
        There exists $\alpha > 0$ such that the sequence of endpoints $z_n$ (as defined in (\ref{eqn_endpoints})) satisfies
        $z_n - q \approx n^{-\alpha}$,
        where the notation $\approx$ means the ratio of the two expressions are bounded above and below by uniform positive constants.
        \item \label{item_flat_Q} $\forall k \geq 0$, there exists $C_k, \alpha_k > 0$ such that:
        $
            \max_{x \in K_n} |F^{(k)}(x)| \leq C_k n^{\alpha_k}.
        $
        \item \label{item_flat_P} For any degree $k \geq 1$, there exists $D_k > 0$ and $\beta_k \in (0, 1/\alpha)$ such that
        $
            \max_{x \in K_n} |(f_1^{-1})^{(k)} (F(x))| \leq \exp(D_k n^{\beta_k}).
        $
    \end{enumerate}
\end{defi}

\begin{lem}[Flat Gluing Lemma]
    \label{lem_glueflat}
    Let $(f_1,F)$ glue flatly.
    Then the full branch map $f$ given by Proposition \ref{prop_finite_realization}
    is $C^\infty$ at $q$, and $q$ is a flat critical point.
\end{lem}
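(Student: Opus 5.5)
The plan is to compute $f_2$ explicitly on each branch domain $K_n$ for large $n$, bound its derivatives of every order by a quantity that decays super-polynomially in $n$, and convert this into a bound in terms of $x-q$ using $z_n - q \approx n^{-\alpha}$.

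\emph{Step 1: explicit form of $f_2$ near $q$.} Let $x \in K_n$, so $\tau(x)=n$. Then $f_2(x) \in I_1$, the next $n-2$ iterates stay in $I_1$, and the $n$-th iterate lands in $I_2$. Hence $F(x) = f_1^{\,n-1}(f_2(x))$, that is,
\begin{equation*}
f_2(x) = f_1^{-(n-1)}(F(x)), \qquad x \in K_n.
\end{equation*}
By item \ref{item_flat_m} of Definition \ref{defi_glueflat}, $f_1^{-1}([q,1)) = [b,q)$, and $f_1^{-1}$ acts on $[0,q]$ as multiplication by $m^{-1}$. Therefore, for every $n \geq 2$,
\begin{equation*}
f_2(x) = m^{-(n-2)}\, f_1^{-1}(F(x)), \qquad x \in K_n.
\end{equation*}
In particular $f_2(x) \to 0$ as $x \to q^+$. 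So $f$ is continuous at $q$ as a circle map, since $f_1(q^-) = 1 \equiv 0$.

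\emph{Step 2: derivative bounds on $K_n$.} By Lemma \ref{lem_adjusted_gluing}, $f_2$ is $C^\infty$ on $(q,1)$, so we may differentiate the formula of Step 1 on the interior of each $K_n$; the adjusted structure makes it valid up to the endpoints $z_n$ as well. Fa\`a di Bruno's formula expresses $(f_1^{-1}\circ F)^{(k)}$ as a finite sum, with coefficients depending only on $k$, of products of the form $(f_1^{-1})^{(j)}(F(x))$ times products of the derivatives $F^{(i)}(x)$, $i \le k$. Items \ref{item_flat_Q} and \ref{item_flat_P} then give
\begin{equation*}
\max_{x\in K_n} |f_2^{(k)}(x)| \le A_k\, n^{a_k}\, \exp\!\bigl(D n^{\beta} - (n-2)\log m\bigr),
\end{equation*}
where $\beta = \max_{j\le k}\beta_j$ and the constants $A_k, a_k, D$ depend only on $k$.

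\emph{Step 3: convert to a bound in $x-q$.} For $x \in K_n$, item \ref{item_flat_endpoints} gives $x - q \approx n^{-\alpha}$, hence $n \approx (x-q)^{-1/\alpha}$. I need the right-hand side of Step 2 to be $O((x-q)^N) = O(n^{-\alpha N})$ for every $N$, uniformly over $x \in K_n$ as $n \to \infty$. This is the step I expect to be the main obstacle. It requires the exponential factor to be dominated: the growth $\exp(Dn^{\beta})$ coming from the derivatives of $f_1^{-1}$ near its flat point must lose against the contraction $m^{-n}$ coming from the affine part of $f_1$. I would first check that the hypotheses give $\beta<1$, so that $Dn^\beta - n\log m \to -\infty$ linearly in $n$ and the polynomial factor $n^{a_k}$ is irrelevant. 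This is where I would look at how the condition $\beta_k < 1/\alpha$ enters: it controls $\exp(D n^{\beta})$ as $\exp(D'(x-q)^{-\beta/\alpha})$ with exponent $\beta/\alpha < 1/\alpha^2$, i.e.\ it is compared against the branch geometry. If $\beta<1$ does not follow directly, I would redo the estimate in the variable $x-q$ to see which comparison is actually needed. I would also check $m>1$, which holds because $b<q$.

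\emph{Step 4: conclude flatness.} Steps 2 and 3 give $f_2^{(k)}(x) \to 0$ as $x \to q^+$, faster than any power of $x-q$, for every $k \ge 0$. On the left side, item \ref{item_flat_f1} says $f_1$ is $C^\infty$ up to $q$ with all derivatives vanishing there. I would then apply the standard lemma: a function that is $C^\infty$ on both sides of a point, continuous at it, and whose one-sided derivatives of every order tend to the same limit $0$, is $C^\infty$ at that point with all derivatives equal to $0$. This lemma follows by induction on $k$ using the mean value theorem. It shows $f$ is $C^\infty$ at $q$ and that $q$ is a flat critical point.
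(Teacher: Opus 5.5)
You follow the paper's route: the formula $f_2|_{K_n}=m^{-(n-2)}\,f_1^{-1}\circ F|_{K_n}$, Fa\`a di Bruno with items (iv)--(v), the contraction $m^{-n}$ beating the growth, and a one-sided-limits argument (the paper's ``squeeze'' step) to conclude. The step you left open is a genuine gap, and the hypotheses as stated do not close it in general.

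What your estimate needs is $\beta<1$. Redoing it in the variable $x-q$ changes nothing. The decay is $m^{-n}=\exp(-c(x-q)^{-1/\alpha})$ and the growth is $\exp(D'(x-q)^{-\beta/\alpha})$. So the comparison is $\beta/\alpha$ against $1/\alpha$, i.e.\ again $\beta<1$; nothing is compared against $1/\alpha^2$. The hypothesis $\beta_k<1/\alpha$ gives $\beta<1$ exactly when $\alpha\ge1$. For $\alpha<1$ it allows $\beta_k\in[1,1/\alpha)$. There your bound $A_kn^{a_k}\exp(Dn^\beta-(n-2)\log m)$ need not tend to $0$, and no other source of decay is available in this argument.

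The paper's proof has the same limitation, hidden in the step $\exp(D_kn^{\gamma_k})\le\exp(C_k(x-q)^{-\gamma_k})$. Since $n\approx(x-q)^{-1/\alpha}$, the correct exponent is $-\gamma_k/\alpha$, and the inequality as written holds only for $\alpha\ge1$. In the application $\alpha=2$ and $\beta_k=4/9$. So the fix is to add $\alpha\ge1$, or simply to assume $\beta_k<1$. Then your Step 3 is one line, since $a_k\log n+Dn^\beta-(n-2)\log m\to-\infty$.

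Two further remarks.

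First, you do not need the conversion to $x-q$ at all. The lemma you invoke in Step 4 only needs $f_2^{(k)}(x)\to0$ as $x\to q^+$, i.e.\ $\sup_{K_n}|f_2^{(k)}|\to0$. This follows from Step 2 and $z_n\to q$ alone, so item (iii) and $\alpha$ play no role once $\beta<1$.

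Second, item (v) taken literally over all of $K_n$ cannot hold. Indeed $F(K_n)=[q,1)$, and $(f_1^{-1})'(y)=1/f_1'(f_1^{-1}(y))\to\infty$ as $y\to1^-$ because $q$ is flat for $f_1$. The paper's own verification (Lemma~\ref{lem_smooth_flat}) bounds this quantity only on $K_n\setminus R_n$, so you should split $K_n$ in Step 2:
\begin{itemize}
\item On $R_n$, adjustedness gives $f_1^{-1}\circ F(x)=q+m_{n-1}(x-z_n)$. Its first derivative is $m_{n-1}\le C_1(n-1)^{\alpha_1}$ (item (iv) applied on $L_{n-1}$), and its higher derivatives vanish.
\item On $K_n\setminus R_n$, apply Fa\`a di Bruno with item (v) as before.
\end{itemize}
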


\subsection{Realization}
\begin{proof}[Proof of Proposition \ref{prop_finite_realization}]
    We will explicitly construct an $f_2$ which proves Proposition \ref{prop_finite_realization}.
    We first define a new family of subintervals:
    \begin{align}
    \label{eqn_define_Jn}
        \forall n \geq 1, J_n := f_1^{-(n-1)}([q,1)).
    \end{align}
    Essentially, $J_n$ is the set of points in $I$ such that its first visit to $[q,1)$ is in time $(n-1)$.
    In particular, $J_1 = [q,1)$.
    Note that the family $\{J_n\}_{n \in \mathbb N}$ partitions $(0,1)$.
    The subintervals $J_n$ accumulate (monotonically in $n$) only at the left endpoint $0$, 
    in a similar manner to the subintervals $K_n$ of $\mathcal{P}^F$.
    Recall we want $f_2$ to be defined such that $F$ is the first return map of $f$ on $[q,1]$.
    Fix a partition element $K_n$, which we want to have points with return time $n$.
    $f_2|_{K_n}$ must satisfy $(f_1)^{n-1} \circ f_2|_{K_n} = F|_{K_n}$.
    So we must define $f_2|_{K_n}$ as follows:
    \begin{align}
    \label{eqn_f_pieces}
        \forall n \geq 1, f_2|_{K_n} := (f_1)^{-(n-1)} \circ F|_{K_n}.
    \end{align}
    We now know each piece $f_2|_{K_n} : K_n \to J_n$ is a nonsingular orientation-preserving homeomorphism.
    Because $\{K_n\}_{n \in \mathbb N}$ partitions $(q,1)$ 
    and $\{J_n\}_{n \in \mathbb N}$ partitions $(0,1)$ with the same accumulating behavior,
    by gluing and defining $f_2(q) = 0$, 
    we obtain a unique nonsingular orientation-preserving 
    homeomorphism $f_2 : [q,1) \to [0,1)$.
    We also obtain a full branch map $f$, and
    $f$ has $F$ as its first return map on $[q,1)$. 
\end{proof}

\subsection{Generating Partition}
\label{subsection_GP}
Lemma \ref{lem_generating_f_realized} is an immediate consequence of the following general lemma
on generating partitions of first return maps:
\begin{lem}
    \label{lem_generating_f}
    Let $f$ be a full branch map with respect to a countable partition $\mathcal{P} = \{I_n\}$.
    Let $\Delta$ be a union of elements of $\mathcal{P}$.
    Let $F: \Delta \to \Delta$ be the first return map, which is full branch with respect to a countable partition $\mathcal{P}^F = \{J_m\}$.
    If $F$ has a generating partition, then $f$ has a generating partition.
\end{lem}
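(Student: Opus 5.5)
We will use the characterization of a generating partition as a partition whose refined cylinders separate points: $\mathcal{P}$ is generating for $f$ if, for any two distinct points $x \neq y$, some $n \geq 0$ has $f^n(x)$ and $f^n(y)$ in different elements of $\mathcal{P}$. Equivalently, the elements of $\bigvee_{i=0}^{n-1} f^{-i}\mathcal{P}$ containing a given point shrink to that point, at least up to a null set. We fix distinct $x,y$ and argue by contradiction: suppose $f^n(x)$ and $f^n(y)$ lie in the same element $I_{k_n}\in\mathcal{P}$ for every $n\geq 0$.

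First we treat the case where the orbit of $x$ visits $\Delta$. Let $n_0$ be a time with $f^{n_0}(x)\in\Delta$. Since $\Delta$ is a union of elements of $\mathcal{P}$ and the itineraries of $x$ and $y$ agree, also $f^{n_0}(y)\in\Delta$. For $u = f^{n_0}(x)$ and $v = f^{n_0}(y)$, the times $j\ge1$ with $f^j(\cdot)\in\Delta$ coincide, because membership in $\Delta$ is read off the $\mathcal{P}$-itinerary. Hence $u$ and $v$ have the same first return time, so they lie in the same element $J_m$ of $\mathcal{P}^F$. The element $J_m$ is determined by the return time together with the $\mathcal{P}$-itinerary up to that return, which is what we need here; if $\mathcal{P}^F$ is coarser only in return time, agreement of return times already suffices. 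Iterating, $F^i(u)$ and $F^i(v)$ lie in the same element of $\mathcal{P}^F$ for all $i$, so $u=v$ because $\mathcal{P}^F$ is generating for $F$. It remains to pull this back to $x=y$. The maps $f|_{I_k}$ are injective and $x,y$ share each branch along the orbit, so $f^{n_0}$ restricted to the common cylinder is injective, and therefore $x=y$.

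The second case is when the orbit of $x$ (and so of $y$) never enters $\Delta$. Generating is then not automatic, since the dynamics on $\Delta^c$ could fail to separate points. For the application, with $\Delta = I_2$ and the orbit staying in $I_1$ forever, the points $x,y$ lie in $\bigcap_n f_1^{-n}[0,q)$, which is $\{0\}$ when $f_1$ is a homeomorphism onto $[0,1)$ with only the fixed point $0$ (as in the realization, where the sets $J_n$ partition $(0,1)$). In the general lemma we will either interpret "generating" modulo null sets or points not entering $\Delta$, or observe that the set of points whose orbits never meet $\Delta$ is handled by this structural fact. We will invoke the realization structure: the sets $J_n$ cover $(0,1)$, so every point other than $0$ enters $\Delta$.

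We expect this second case, and pinning down precisely which notion of generating is used (pointwise separation versus mod $0$), to be the main obstacle. For Lemma~\ref{lem_generating_f_realized} we then conclude that $f$ is expanding-like in the topological sense: it is a degree-two covering with a generating partition, and hence is conjugate to the doubling map by the standard symbolic coding argument, the itinerary map being a bijection onto the full $2$-shift modulo the usual boundary identifications.
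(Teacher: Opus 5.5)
Your first case is correct and is, in substance, the paper's argument read contrapositively. The paper shows that points separated by some $\mathcal{P}^F_M$ are separated by $\mathcal{P}_{N+1}$, because each $F$-cylinder is a union of $f$-cylinders. You show the converse direction: a common $\mathcal{P}$-itinerary forces a common $\mathcal{P}^F$-itinerary. Your hedge about what $\mathcal{P}^F$ is can be dropped, since the branch domains of a first-return map are the sets of points of $\Delta$ with a prescribed $\mathcal{P}$-itinerary up to the return time. Where you differ is in how you reach $\Delta$. You push forward to the first visit and undo $f^{n_0}$ by injectivity on the common cylinder. The paper instead pulls $x_1,y_1$ back into $\Delta$ by an inverse branch $(f|_{I_m})^{-1}$ with $I_m\subset\Delta$.

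The gap is your second case. For the general lemma you do not resolve it. You fall back on the realization structure ($\{J_n\}$ partitioning $(0,1)$) or on redefining ``generating'', and neither is among the lemma's hypotheses. The missing observation is that the hypotheses already make this case empty.

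Since $F$ is full branch with respect to a partition of $\Delta$, every $w\in\Delta$ has a finite return time $\tau(w)$. Given any $z$, pick $I_m\subset\Delta$ and set $w=(f|_{I_m})^{-1}(z)\in\Delta$, which exists because $f$ is full branch. Then $f^{\tau(w)-1}(z)=f^{\tau(w)}(w)\in\Delta$. So every orbit enters $\Delta$. From then on all iterates of $F$ are defined, so the orbit returns infinitely often; this also supplies the assumption you left unstated in your first case.

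If $\mathcal{P}^F$ only covers $\Delta$ up to an exceptional set $X$, the points that never enter $\Delta$ lie in $f(X\cap I_m)$, which is exceptional in the same sense. This is the same harmless exclusion the paper makes by restricting to ``non-exceptional'' points, not a new obstacle. This inverse-branch pull-back is exactly what the paper's proof uses from the outset, which is why it needs no case split. With it added, your proof is complete.
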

\begin{proof}
    Take a pair of (non-exceptional) points $x_1,y_1 \in I_n \in \mathcal{P}$.
    Pick $I_m \subset \Delta$, and let $x = f^{-1}|_{I_m} (x_1)$, $y = f^{-1}|_{I_m} (y_1)$. 
    Thus $x,y \in \Delta$.
    Because $F$ has a generating partition, 
    $\exists M \geq 0$ such that $x,y$ lie on different elements of the refinement $\mathcal{P}_M^F$.
    Let $J$ be the partition element of $\mathcal{P}_M^F$ that contains $x$ (and does not contain $y$). 
    Observe that $F^M:J \to \Delta$ is a homeomorphism. 
    By the definition of $F$, $\exists N \geq 0$ such that
    $f^N: J \to \Delta$ is a homeomorphism.
    Recall $\Delta$ is a union of elements of $\mathcal{P}$,
    thus the refinement $\mathcal{P}_{N+1}$ has a collection of elements that partition $J$.
    Therefore, $x$ and $y$ must also lie on different elements of $\mathcal{P}_{N+1}$.
\end{proof}

\subsection{Gluing in $(0,q)$}
\label{subsection_pieces}
\label{section_gluing}

To prove Lemma \ref{lem_adjusted_gluing}, we study the regularity of $f_2$ in $(0,q)$.
For this purpose, we introduce the notation of graph pieces.
Given a pair of integers $\ell \geq m \geq 1$, the graph piece $\phi_{\ell,m}: K_\ell \to J_m$
is an orientation preserving homeomorphism defined by the following composition:
\begin{equation}
\label{eqn_define_piece}
    \phi_{\ell,m} := f_1^{\ell-m} \circ f_2|_{K_\ell}.
\end{equation}

\begin{figure}[H]
    \centering
    \includegraphics[width=0.4\linewidth]{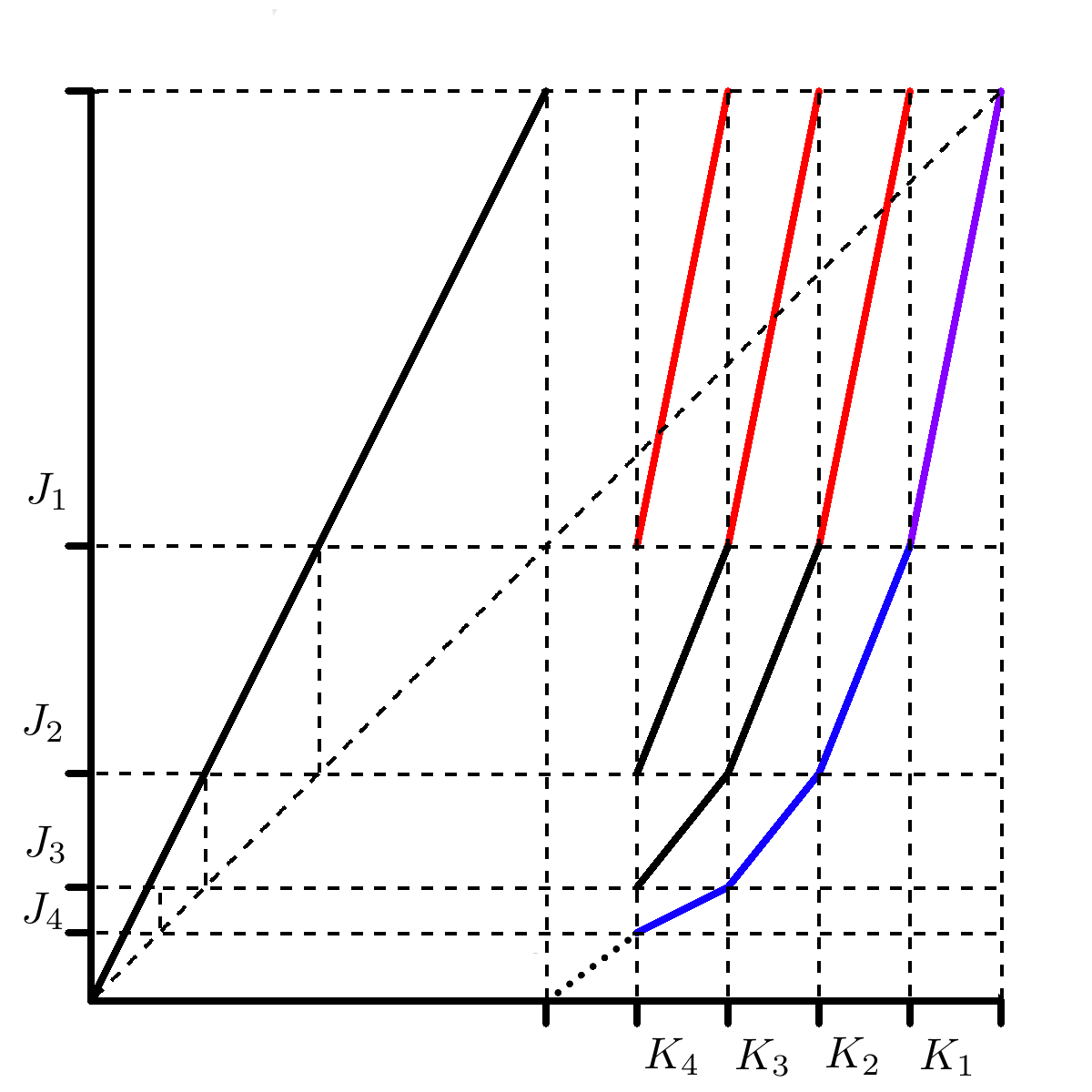}
    \caption{Example of $f$ and $\phi_{\ell,m}$ with $\ell \leq 4$.}
\end{figure}

We can also express the pieces of $f_2$ and the branches of $F$  as graph pieces:
\begin{align}
    \label{eqn_piece_ID} \forall n \geq 1: 
    & \phi_{n,n} = f_1^{n-n} \circ f_2|_{K_n} =  f_2|_{K_n},
    & \phi_{n,1} = f_1^{n-1} \circ f_2|_{K_n} = F|_{K_n}.
\end{align}
We make an elementary observation on the regularity of $f_2$.
Suppose $f_1$ and $F$ have a certain regularity (say $C^r, r \geq 1$).
Then by (\ref{eqn_f_pieces}) and by composition, $f_2$ has the same regularity on each interior $\mathring K_n$.
This means to prove the regularity of $f_2$, we only need to prove it for the gluing of consecutive pieces.
In fact, we have the following general lemma for $C^\infty$ regularity:
\begin{lem}[General Gluing Lemma]
\label{lem_gluing}
    Let $f_1 : [0,q) \to [0,1)$ be a $C^\infty$ diffeomorphism,
    let $F \in \mathcal{F}'$ have $C^\infty$ branches,
    and let $f_2$ be the unique second branch given by Proposition \ref{prop_finite_realization}.
    Then the following three statements are equivalent:
    \begin{enumerate}[label=(\roman*)]
        \item $f_2: [q,1) \to [q,1)$ is $C^\infty$ on $(q,1)$.
        \item $\forall n \geq 1$, $\phi_{n,n}$ and $\phi_{n+1,n+1}$ glue in a $C^\infty$ way at a neighborhood of $z_{n+1}$.
        \item $\forall n \geq 1$, $\phi_{n,1}$ and $\phi_{n+1,2}$ glue in a $C^\infty$ way at a neighborhood of $z_{n+1}$.
    \end{enumerate}
\end{lem}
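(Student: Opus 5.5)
The plan is to reduce all three conditions to statements about finitely many pieces near each junction $z_{n+1}$, and then move between them by composing with fixed diffeomorphisms. First, as already observed, each restriction $f_2|_{\mathring K_n} = f_1^{-(n-1)}\circ F|_{\mathring K_n}$ is $C^\infty$, because $f_1$ is a $C^\infty$ diffeomorphism onto its image (so $f_1^{-1}$ is $C^\infty$ on $(0,1)$) and $F$ has $C^\infty$ branches. The interior points of $(q,1)$ where smoothness can fail are therefore exactly the junctions $z_{n+1}$, $n\ge 1$; note $z_1=1$ is not in the open interval. Since $C^\infty$ is a local property, (i) is equivalent to: for every $n\geq1$, $f_2$ is $C^\infty$ on a neighborhood of $z_{n+1}$. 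On such a neighborhood $f_2$ is exactly the function obtained by gluing $\phi_{n,n}=f_2|_{K_n}$ (on the right) and $\phi_{n+1,n+1}=f_2|_{K_{n+1}}$ (on the left). This is precisely (ii), so (i)$\iff$(ii) is essentially a restatement.

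For (ii)$\iff$(iii), fix $n$ and let $g$ be the function near $z_{n+1}$ glued from $\phi_{n+1,n+1}$ and $\phi_{n,n}$, i.e. $g=f_2$ there. Apply $f_1^{n-1}$. Since $\phi_{n,n}$ maps into $J_n$ and $\phi_{n+1,n+1}$ maps into $J_{n+1}$, the point $f_2(z_{n+1})$ is the common boundary point $f_1^{-(n-1)}(q)$ of $J_n$ and $J_{n+1}$, which lies in $(0,q)$. On a small neighborhood $V$ of this point, $f_1^{n-1}$ is defined and is a $C^\infty$ diffeomorphism onto a neighborhood of $q$ inside $(0,1)$: its iterates stay in $(0,q)$ until the last step, where $f_1^{-1}(q)\in(0,q)$ is sent to $q$. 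By definition of the graph pieces,
\begin{equation*}
f_1^{n-1}\circ \phi_{n,n}=\phi_{n,1}, \qquad f_1^{n-1}\circ\phi_{n+1,n+1}=\phi_{n+1,2}.
\end{equation*}
Hence $f_1^{n-1}\circ g$ is the function glued from $\phi_{n+1,2}$ and $\phi_{n,1}$ near $z_{n+1}$. Composition with the $C^\infty$ diffeomorphism $f_1^{n-1}|_V$ and with its $C^\infty$ inverse preserves $C^\infty$ smoothness. So $g$ is $C^\infty$ near $z_{n+1}$ if and only if $f_1^{n-1}\circ g$ is, which gives (ii)$\iff$(iii).

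The main point needing care is the domain bookkeeping: for $g$ continuous at $z_{n+1}$ with $g(z_{n+1})\in(0,q)$, a neighborhood of $z_{n+1}$ is mapped into $V$, and the inverse $f_1^{-(n-1)}$ is defined near $q$ (in particular on both sides of $q$). This uses that $f_1$ is a diffeomorphism from $[0,q)$ onto $[0,1)$. The case $n=1$ is the trivial case $f_1^0=\mathrm{id}$. Otherwise the argument is purely formal.
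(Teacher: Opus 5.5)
Your proposal is correct and follows the paper's proof. You get (i)$\iff$(ii) because $f_2$ is already $C^\infty$ on each $\mathring K_n$, and (ii)$\iff$(iii) by composing with $f_1^{n-1}$ via $f_1^{n-1}\circ\phi_{n,n}=\phi_{n,1}$ and $f_1^{n-1}\circ\phi_{n+1,n+1}=\phi_{n+1,2}$; your extra bookkeeping makes explicit what the paper leaves implicit in ``because $f_1^{n-1}$ is $C^\infty$'' — namely that $f_1^{n-1}$ is a local $C^\infty$ diffeomorphism near $f_1^{-(n-1)}(q)$, which the converse direction needs, and that $n=1$ is the identity case.
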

\begin{proof}
    Recall by (\ref{eqn_piece_ID}) that $f_2$ is the gluing of pieces $\{\phi_{n,n}\}_{n \geq 1}$.
    By construction, $f_2$ has $C^\infty$ regularity on each interior $\mathring K_n$, thus (i) and (ii) are equivalent.
    Now fix an index $n$. We have the following equalities:
    \begin{align*}
        f_1^{n-1} \circ \phi_{n,n} &= \phi_{n,1},
        &f_1^{n-1} \circ \phi_{n+1,n+1} &= \phi_{n+1,2}.
    \end{align*}
    Because $f_1^{n-1}$ is $C^\infty$, the pair $(\phi_{n,n}, \phi_{n+1,n+1})$ glues in a $C^\infty$ way if and only if the pair $(\phi_{n,1}, \phi_{n+1,2})$ glues in a $C^\infty$ way.
    Thus (ii) and (iii) are equivalent.
\end{proof}
\begin{proof}[Proof of Lemma \ref{lem_adjusted_gluing}]
    We want item (i) of Lemma \ref{lem_gluing}, so it is sufficient to verify item (iii).
    Fix $n \geq 1$, and take $U = L_n \bigcup R_{n+1}$ as the neighborhood of $z_{n+1}$.
    Because the pair is adjusted, we obtain:
    \begin{align*}
        \phi_{n,1}|_U(x) = F|_{L_n}(x) &= q + m_n (x-z_{n+1}),\\
        \phi_{n+1,2}|_U(x) = f_1^{-1} \circ F|_{R_{n+1}}(x) = f_1^{-1}(f( q + m_{n}(x - z_{n+1}))) &= q + m_n (x-z_{n+1}).
    \end{align*}
    The two pieces together define an affine map $x \mapsto q + m_n (x-z_{n+1})$ on $U$, thus they glue smoothly at $q$.
\end{proof}

\subsection{Flat gluing at $q$}
\label{subsection_flat}
Lemma \ref{lem_adjusted_gluing} gives a sufficient condition for $f$ to be $C^\infty$ in $(q,1)$.
However, this does not imply a $C^\infty$ extension to $[q,1)$, because the derivatives might oscillate close to $q$.
In particular if $F \in \mathcal{F}_*$, we should expect derivatives of $f_2$ to oscillate greatly.
To prove Lemma \ref{lem_glueflat}, we must bound this oscillation.
\begin{proof}[Proof of Lemma \ref{lem_glueflat}]
    We claim the following limit holds for any $k \geq 0$:
    \begin{equation}
    \label{eqn_flat_asymptote}
        \limsup_{x \to q^+} |f_2^{(k)}(x)|/ \exp(-(x-q)^{- 1/\alpha }) < \infty.
    \end{equation}
    Observe $\exp(-(x-q)^{-1/\alpha})$ 
    extends to a $C^\infty$ diffeomorhism on $[q,1)$ 
    such that $q$ is a flat critical point.
    By (\ref{eqn_flat_asymptote}) and Squeeze Theorem, we obtain $f_2$ also extends to a 
    $C^\infty$ diffeomorphism on $[q,1)$ such that $q$ is a flat critical point.
    Thus by item \ref{item_flat_f1} of Definition \ref{defi_glueflat}, 
    $f$ is $C^\infty$ at $q$, and $q$ is a flat critical point.
    
    We prove (\ref{eqn_flat_asymptote}) for $k = 0$.
    Take $x \in [q,1)$.
    There is a unique $n > 1$ such that $x \in [z_{n+1},z_n)$.
    By \ref{item_flat_endpoints}, $n \approx (x - q)^{-1/\alpha}$.
    By \ref{item_flat_m}, $\exists C > 0$ such that:
    \begin{align}
        \label{eqn_flat_asymptote_0}
        &f_2|_{K_n}(x) \leq f_1^{-(n-1)}(1) = f_1^{-(n-2)}(q) = q m^{-(n-2)}\\
        \nonumber &\, = \exp(- \log (m) (n-2)) \leq \exp(- C (x-q)^{-1/\alpha}).
    \end{align}
    
    Now we prove (\ref{eqn_flat_asymptote}) for $k \geq 1$. 
    Recall by (\ref{eqn_f_pieces}), $f_2|_{K_n} = f_1^{n-1} \circ F|_{K_n}$.
    By \ref{item_flat_m}, 
    \begin{equation}
    \label{eqn_flat_decompose}
        f_2|_{K_n} = m^{-(n-2)} (f_1^{-1} \circ F|_{K_n}).
    \end{equation}
    We want to bound the $k$-th derivative of $f_1^{-1} \circ F|_{K_n}$ evaluated at $x \in K_n$.
    By Fa\`a di Bruno's formula, the $k$-th derivative of a general composition $h \circ g$ evaluated at $x$
    is a polynomial $P_k$ with $2k$ variables
    evaluated at $h^{(m)}(g(x))$ and $g^{(m)}(x)$ with $1 \leq m \leq k$.
    Recall the bounds of \ref{item_flat_Q}, \ref{item_flat_P}.
    Let $\gamma_k = \max_{m \leq k} \beta_k$. Note $\gamma_k < 1/\alpha$.
    As $n \to \infty$, the fastest growing bound is $ \exp(D_k n^{\gamma_k})$.
    Let $S_k$ be the sum of absolute value of coefficients of $P_k$. 
    For sufficiently large $n$, $\exists C_k > 0$ such that:
    \begin{align}
    \label{eqn_flat_Faa}
    |(f_1 \circ F|_{K_n})^{(k)}(x)| \leq S_k [\exp(D_k n^{\gamma_k})]^{\deg(P_k)}
    \leq \exp(C_k (x-q)^{-\gamma_k}).
    \end{align}
    By (\ref{eqn_flat_asymptote_0}), $m^{-(n-2)} \leq \exp(-C(x-q)^{-1/\alpha} - \log q)$.
    Substituting together with (\ref{eqn_flat_Faa}) to (\ref{eqn_flat_decompose}), we obtain:
    $|f_2^{(k)}(x)| \leq \exp(C_k (x-q)^{-\gamma_k} -C(x-q)^{-1/\alpha} - \log q)$
    for $x$ sufficiently close to $q$.
    Because $\gamma_k < 1/\alpha$,
    we have (\ref{eqn_flat_asymptote}) for $k \geq 1$.

\end{proof}

\section{Smooth Construction} 
\label{section_construction_smooth}
In this section we prove Proposition \ref{prop_result_smooth}.
Our strategy is to choose a suitable pair $(f_1,F)$, 
then we obtain a unique $f$ by Proposition \ref{prop_finite_realization}.
First, fix a point $q \in (0,1/2)$, and define $\mathcal{P} := \{I_1 = [0,q), I_2 = [q,1)\}$.

\subsection{Choice of $f_1$}
\label{subsection_f1_smooth}
We choose $f_1:[0,q) \to [0,1)$ to be a $C^\infty$ orientation preserving homeomorphism that satisfies three properties:
\begin{itemize}
    \item There exists a point $b \in (0,q)$, such that the restriction $f_1|_{[0,b]}:[0,b] \to [0,q]$
    is affine and has slope:
    \begin{equation}
        \label{eqn_m}
        m = q/b > 1.
    \end{equation}
    \item Define $\varphi: [0,1] \to [0,e^{-9}]$ by $\varphi(0) := 0$, $\varphi(x) := \exp (-9x^{(-1/9)})$.
    There exists a one-sided open neighborhood $U$ of $q$ such that:
    \begin{equation}
        \label{eqn_varphi_ngbd}
        f_1|_{U} (x) := 1 - \varphi(q-x).
    \end{equation}
    Note because $0$ is a flat critical point of $\varphi$, we have
    \begin{equation}
        \label{eqn_critical_q}
        \forall k \geq 1, f_1^{(j)}(q) = 0,
    \end{equation}
    i.e. $q$ is a flat critical point of $f_1$.
    \item We also assume that 
    \begin{equation}
        \label{eqn_f_1_nonflat}
        \forall x \in [0,q] \setminus U : f'(x) > 0.
    \end{equation}
    Recall the function $x \to 1/x$ is smooth away from $0$, and $(f^{-1})'(x) = 1/f'(x)$.
    Thus $f^{-1}$ is also $C^\infty$ on the compact set $[0,q] \setminus U$,
    and as a consequence the derivatives of $f^{-1}$ 
\end{itemize}
For future reference, we also note bounds of $\varphi(x)$ and derivatives of $\varphi^{-1}(y)$.
First:
\begin{equation}
    \label{eqn_varphi_bound}
    \varphi(x) < x.
\end{equation}
Furthermore, we have $\varphi^{-1}(y) = (-(\log y) / 9)^{-9}$.
Note  $(\varphi^{-1})'(y) = y^{-1} (-(\log y) / 9)^{-10}$.
By induction, we have for a sequence of polynomials $(\phi_k)_{k \geq 1}$:
\begin{equation}
    \label{eqn_varphi_inv_bounds_y}
    (\varphi^{-1})^{(k)}(y) = y^{-k} \phi_k\left((-(\log y) / 9)^{-1}\right).
\end{equation}
Substituting $y = \varphi(x)$, we obtain for a constant $C_k$ depending on $k$:
\begin{equation}
    \label{eqn_varphi_inv_bounds}
    (\varphi^{-1})^{(k)}(\varphi(x)) = \varphi(x)^{-k} \phi_k (x^{1/9}) \leq C_k \varphi(x)^{-k} 
\end{equation}

\subsection{Choice of Partition $\mathcal{P}^F$}
\label{subsection_smooth_partition}
Before we choose $F$, we must choose a partition $\mathcal{P}^F= \{K_n\}_{n \geq 1}$.
This is done by choosing their endpoints $z_n$ (as in (\ref{eqn_endpoints})):
\begin{align}
    \label{eqn_endpoint_smooth}
    z_1:= 1,\quad z_2 := 1-(1-q)/m,\quad
    \forall n \geq 3, z_n := q + (z_2 - q) 2 [(n-1)n]^{-1}.
\end{align}
Note also $z_2 = q + (z_2 - q) 2 [(2-1)2]^{-1}$. We note their lengths:
\begin{align}
    \label{eqn_length_smooth}
    |K_1| &= z_2 - z_1 = (1-q)/m,\\
    \nonumber \forall n \geq 2, |K_n| &= z_n - z_{n+1} = (z_2 - q) 4 [(n-1)n(n+1)]^{-1}.
\end{align}
Recall the union $K_n^- = \bigcup_{k \geq n+1} K_n$ defined in (\ref{eqn_K_n_minus}).
We note its length.
\begin{align}
    \label{eqn_union_length_smooth}
    |K_n^-| &= z_{n+1} - q = (z_2 - q)[n(n+1)]^{-1}.
\end{align}

\subsection{Construction of $F$}
The following Lemma \ref{lem_F_smooth} specifies a choice of $F \in \mathcal{F}'$ with several desired properties for further reference.

\begin{lem}
\label{lem_F_smooth}
There exists a map $F \in \mathcal{F}'$ with branch domains given by $\mathcal{P}^F$
such that $(f_1,F)$ is adjusted, and together with the $L_n,R_n$ given in Definition \ref{defi_adjusted} satisfy:
\begin{enumerate}[label= (\roman*)]
    \item \label{item_affine_smooth} The first branch $F|_{K_1}$ is affine with slope $m_1 = m$.
    \item \label{item_ratio_smooth} $\prod_{n \geq 1} |L_n| / |K_n| > 0$.
    \item \label{item_image_smooth} $\forall n \geq 1, F(L_n) = K_n^- = \bigcup_{k \geq n+1} K_n$
    \item \label{item_slope_smooth} For $n \geq 2$, $F|_{L_n}$ is affine with constant slope $m_n$, and $n-1 < m_n < n$.
    \item \label{item_Rn_smooth} $|R_n| \approx n^{-5}$.
    \item \label{item_varphi_smooth} $\forall x \geq 2$, $F|_{R_n}(x) = 1 - \varphi(m_{n-1} (z_n - x))$.
    \item \label{item_GP_smooth} $F|_{K_n}'(x) \geq m_n$, except on a subinterval of the form $[z_n - d_n, z_n)$ 
    where $F$ is concave, and $d_n < |K_1|/2$.
    \item \label{item_derivatives_smooth} $\forall k \geq 1$, 
    there exists positive numbers $C_k, \alpha_k$ 
    such that $F_{K_n}^{(k)}(x) \leq C_k n^{\alpha_k}$.
\end{enumerate}
\end{lem}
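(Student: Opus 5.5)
We build $F$ branch by branch on the partition $\mathcal{P}^F$ of Section \ref{subsection_smooth_partition}. Each branch $F|_{K_n}:K_n\to[q,1]$ is assembled from three pieces: an affine piece on $L_n$, a transition piece in the middle, and the horizontally contracted copy of $f_1$ on $R_n$.

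\textbf{First branch.} Take $F|_{K_1}(x)=q+m(x-z_2)$. Since $|K_1|=(1-q)/m$ by (\ref{eqn_length_smooth}), this maps $K_1$ onto $[q,1)$ with slope $m_1=m$. This gives item \ref{item_affine_smooth}, and $L_1=F|_{K_1}^{-1}(K_1^-)$ is automatically the left subinterval of $K_1$.

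\textbf{Slopes and left pieces.} For $n\ge2$, item \ref{item_image_smooth} forces the slope to be $m_n=|K_n^-|/|L_n|$, which fixes $\delta_n=|L_n|$. We want $|L_n|/|K_n|\ge 1-c/n^2$, so that item \ref{item_ratio_smooth} holds. By (\ref{eqn_length_smooth}) and (\ref{eqn_union_length_smooth}), $|K_n^-|/|K_n|=(n-1)/4$. So $m_n=\frac{n-1}{4}\cdot\frac{|K_n|}{|L_n|}$, which is about $(n-1)/4$, contradicting item \ref{item_slope_smooth}'s requirement $n-1<m_n<n$. I would therefore first re-check the normalization, and the intended reading is presumably that $|L_n|/|K_n|\to1$ with $m_n\asymp n$. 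If the constants really are as printed, I would adjust the endpoint formula by a constant factor, or read item \ref{item_slope_smooth} up to constants, keeping $m_n\asymp n$ as the essential content.

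\textbf{Right pieces.} The key point is that only a vanishing fraction of $K_n$ needs to be spent outside $L_n$. On $R_n=(z_n-\varepsilon_n,z_n)$ set $F(x)=1-\varphi(m_{n-1}(z_n-x))$, which is item \ref{item_varphi_smooth} and equals $f_1(q+m_{n-1}(x-z_n))$ by (\ref{eqn_varphi_ngbd}). Choose $\varepsilon_n\approx n^{-5}$ (item \ref{item_Rn_smooth}); then $m_{n-1}\varepsilon_n\approx n^{-4}$ is small, so we stay inside $U$, and by (\ref{eqn_varphi_bound}) $F$ nearly reaches $1$ there. This piece is concave, since $\varphi$ is convex near $0$.

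\textbf{Middle piece.} On the gap $M_n=[z_{n+1}+\delta_n,\,z_n-\varepsilon_n]$, of length about $n^{-5}$ or less, interpolate with a $C^\infty$ increasing function, built from a fixed smooth bump profile rescaled to $M_n$. It must:
\begin{enumerate}[label=(\alph*)]
\item match all derivatives of the affine piece at the left end (value $z_{n+1}$ in image coordinates, slope $m_n$, higher derivatives $0$);
\item match all derivatives of the $\varphi$ piece at the right end;
\item have derivative $\ge m_n$, except on a final concave stretch $[z_n-d_n,z_n)$.
\end{enumerate}
The rise required across $M_n$ is about $1-z_{n+1}$, which is of order $1$, over a length of order $n^{-5}$. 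This forces slopes of order $n^{5}$, which is consistent with item \ref{item_GP_smooth}. Rescaling the fixed profile gives $k$-th derivatives of size $C_k n^{5k}$ (item \ref{item_derivatives_smooth}), because the inputs $m_n,\delta_n^{-1},\varepsilon_n^{-1}$ are all polynomial in $n$. The derivatives of the $\varphi$ piece are $m_{n-1}^k\varphi^{(k)}$, which is also polynomial. For the concave stretch, choose $d_n\le 2n^{-5}<|K_1|/2$ for $n\ge2$, after possibly shrinking constants.

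\textbf{Remaining properties.} Adjustedness (Definition \ref{defi_adjusted}) holds by construction, and $C^\infty$ branches follow from the smooth matching. Membership in $\mathcal{F}'$ needs $F(z_n^-)=1$ and $F(z_{n+1})=q$, both of which hold.

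\textbf{Main obstacle.} The hardest step is the middle interpolation. It must simultaneously match infinite jets at both ends, stay monotone with derivative $\ge m_n$ until a single concave tail, and have derivatives bounded polynomially in $n$. I would handle it with a standard partition-of-unity blend: blend the affine function with a steep convex polynomial, then blend that with the $\varphi$ piece, using cutoffs rescaled to length $n^{-5}$. The derivative bounds then come from Fa\`a di Bruno/Leibniz applied to the rescaled cutoffs.
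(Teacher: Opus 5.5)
Your overall architecture is the paper's: an affine piece on $L_n$ onto $K_n^-$, a transition piece, and the contracted copy of $f_1$ on $R_n$, with $|M_n|,|R_n|\approx n^{-5}$. Your suspicion about item (iv) is also correct. In fact items (ii)--(iv) are jointly impossible for the fixed partition. From \eqref{eqn_endpoint_smooth} one gets $|K_n^-| = 2(z_2-q)[n(n+1)]^{-1}$. The display \eqref{eqn_union_length_smooth} drops this factor $2$, which is where your $(n-1)/4$ comes from. So $|K_n^-|/|K_n| = (n-1)/2$. Since (iii) forces $m_n=|K_n^-|/|L_n|$, the bound $m_n>n-1$ would require $|L_n|<|K_n|/2$ for every $n\ge 2$, which contradicts (ii). The paper's formula \eqref{eqn_smooth_mn} is simply miscomputed.

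Your first remedy does not work. The ratio $|K_n^-|/|K_n|$ is unchanged when $z_n-q$ is multiplied by a constant; only the decay exponent matters, since $z_n-q\approx n^{-\alpha}$ gives a ratio $\approx n/\alpha$. For example, $z_n-q=(z_2-q)/(n-1)$ gives ratio exactly $n-1$. With $|M_n|=|R_n|=(N+n)^{-3}|K_n|\approx n^{-5}$, item (iv) then holds as stated, and the flat gluing still works with $\alpha=1$, $\beta_k=4/9$. Your second remedy, reading (iv) as $m_n\asymp n$, suffices for (viii) but drops what Lemma \ref{lem_GP_smooth} actually uses, namely $m_n>1$ for every $n\ge 2$. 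With the printed partition, $m_2=\tfrac12(1-2(N+2)^{-2})^{-1}<1$.

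The second gap is the middle piece, which you rightly single out but do not resolve. Write $\psi(x)=1-\varphi(m_{n-1}(z_n-x))$. A function-level blend $F=(1-\chi)P+\chi\psi$ has $F'=(1-\chi)P'+\chi\psi'+\chi'(\psi-P)$. This turns negative wherever a steep convex $P$ overshoots $\psi\approx 1$. Moreover $F''$ contains the term $(1-\chi)P''>0$, which fights the concavity needed on the tail. So neither monotonicity nor item (vii), which is exactly the input to Lemma \ref{lem_GP_smooth}, follows from a standard partition-of-unity blend.

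The paper obtains (vii) structurally. On $M_n$ it draws the affine piece, then a steep line segment, then the $\varphi$-piece: a graph with one convex corner followed by one concave corner. It then mollifies with a positive kernel, which preserves convexity and concavity and gives $d_n=|R_n|+2|M_n|/3$. As written, that convolution also perturbs the $\varphi$-piece it must match at $R_n$, so it needs care as well.

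A clean fix in your framework is to blend derivatives rather than functions. Take a nondecreasing $C^\infty$ step $h$ with $h=0$ on $(-\infty,0]$ and $h=1$ on $[1,\infty)$. Let $a$ be the right end of $L_n$, and set:
\begin{itemize}
\item $F'(x)=m_n+(S-m_n)h((x-a)/\ell)$ on $[a,a+\ell]$;
\item $F'=S$ on a plateau;
\item $F'(x)=S+(\psi'(x)-S)h((x-c)/\ell)$ on $[c,c+\ell]$, with $c+\ell=z_n-\varepsilon_n$.
\end{itemize}
Choose $S\approx n^5$ so that $\int_{M_n}F'=\psi(z_n-\varepsilon_n)-z_{n+1}$; this is possible because the integral is affine and increasing in $S$. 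Then all jets match at both ends, and $F'\ge m_n$ before $c$. On $[c,c+\ell]$ one has $F''=\ell^{-1}h'(\cdot)(\psi'-S)+h(\cdot)\psi''\le 0$, since $\psi'<S$ and $\varphi$ is convex near $0$. Your polynomial derivative bounds then go through unchanged.
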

\begin{proof}
    To satisfy \ref{item_affine_smooth}, let $F|_{K_1} : K_1 \to [q,1)$ to be affine. 
    By (\ref{eqn_length_smooth}), it has constant slope $(1-q)/K_1 = m$.
    To satisfy item (i) of Definition \ref{defi_adjusted} and \ref{item_image_smooth} for $n = 1$, let $L_1$ be the subset of $K_1$ that maps to $K_1^-$.
    We now define the other branches.
    Let $N$ be a large integer to be fixed later.
    Now we subdivide $K_n$ into three subintervals:
    \begin{align}
        \label{eqn_smooth_Ln} L_n := &[z_{n+1}, z_n - 2 (N+n)^{-2}|K_n|), 
        &|L_n| = (1-2 (N+n)^{-2})|K_n| \\
        \label{eqn_smooth_Rn}
        R_n := &[z_n - (N+n)^{-2} |K_n|, z_n), &|R_n| = (N+n)^{-2}|K_n|,\\
        M_n := &K_n \setminus (L_n \cup R_n), &|M_n| = (N+n)^{-2}|K_n| = |R_n|.
    \end{align}
    Observe that $|L_n|/|K_n| = 1- 2 (N+n)^{-2}$. 
    Because $\sum 2 (N+n)^{-2} < \infty$ and $\log(1 + x) \approx x$ for small $x$, we conclude \ref{item_ratio_smooth}.
    To satisfy item (i) of Definition \ref{defi_adjusted} and \ref{item_image_smooth} for $n \geq 2$, 
    let $F|_{L_n} : L_n \to K_n^-$ to be affine.
    By (\ref{eqn_length_smooth}), (\ref{eqn_union_length_smooth}), and (\ref{eqn_smooth_Ln}):
    \begin{equation}
        \label{eqn_smooth_mn}
        m_n = |K_n^-|/|L_n| = (1- 2 (N+n)^{-2})^{-1} (n-1).
    \end{equation}
    Note because $1 < (1- 2 (N+n)^{-2})^{-1} < (1 - n^{-1})^{-1}$, we have $(n-1) < m_n < n$, and thus \ref{item_slope_smooth}.
    To confirm \ref{item_Rn_smooth}, we compute by (\ref{eqn_length_smooth}) and (\ref{eqn_smooth_Rn}):
    \begin{equation}
        |R_n| = (N+n)^{-2}|K_n| =  (z_2 - q)4[(n-1)n(n+1)]^{-1} (N+n)^{-2} \approx n^{-5}.
    \end{equation}
    Because $F$ is adjusted, we must have $F|_{R_n}(x) = f_1(q + m_{n-1}(x- z_n))$.
    By (\ref{eqn_varphi_ngbd}), on a onesided neighborhood $U$ of $q$,
    we have $f_1(x) = 1 - \varphi(q - x)$.
    Assume $N$ is large enough such that $n|M_n \cup R_n| < |U|$ for all $n$.
    By \ref{item_slope_smooth}, $m_n|R_n| < |U|$.
    To satisfy item (ii) of Definition \ref{defi_adjusted}:
    \begin{equation}
        F|_{R_n}(x) = 1 - \varphi (m_{n-1}(x- z_n)),
    \end{equation}
    which verifies \ref{item_varphi_smooth}.
    Now we define a nonsmooth $F|_{M_n}$, to smoothen later.
    We subdivide $M_n$ into three subintervals of length $|M_n|/3$.
    On the left subinterval, $F(x) = q + m_n (x - z_{n+1})$, so it glues to $F|_{L_n}$ smoothly.
    On the right subinterval, $F(x) = 1 - \varphi (m_{n-1}(x- z_n))$, so it glues to $F|_{R_n}$ smoothly.
    On the middle subinterval, we define $F$ by an affine line segment.
    To smoothen $F|_{M_n}$, let $g$ be a nonnegative $C^\infty$ bump function supported on $[0,1]$.
    Let $g_n := g(x/(|M_n|/3))$, it is compactly supported on $[0, |M_n|/3]$.
    We obtain a smooth $F|_{M_n}$ by convolution of $g_n$ against the nonsmooth $F$.
    The three pieces $F|_{L_n}, F|_{M_n}, F|_{R_n}$ are smooth glue smoothly, which satisfies \ref{item_smooth_adjusted} of Definition \ref{defi_adjusted}.
    Furthermore because convolution with positive kernels preserves convexity and concavity, \ref{item_GP_smooth} holds for $d_n = |R_n| + 2|M_n|/3$, and we can assume $N$ is large enough such that $d_n < |K_1|/2$ for all $n$.
    Fix $k\geq 1$.
    Observe $\max |g_{n}^{(k)}| = (|M_n|/3)^{-k} \max |g^{(k)}(x)| \approx (n^{-5})^{-k}$.
    By differential properties of convolution, we obtain \ref{item_derivatives_smooth}.
    Furthermore the three pieces $F|_{L_n}, F|_{M_n}, F|_{R_n}$ glue smoothly, which satisfies \ref{item_smooth_adjusted} of Definition \ref{defi_adjusted}.
\end{proof}

\begin{figure}[H]
    \centering
    \includegraphics[width=0.35\linewidth]{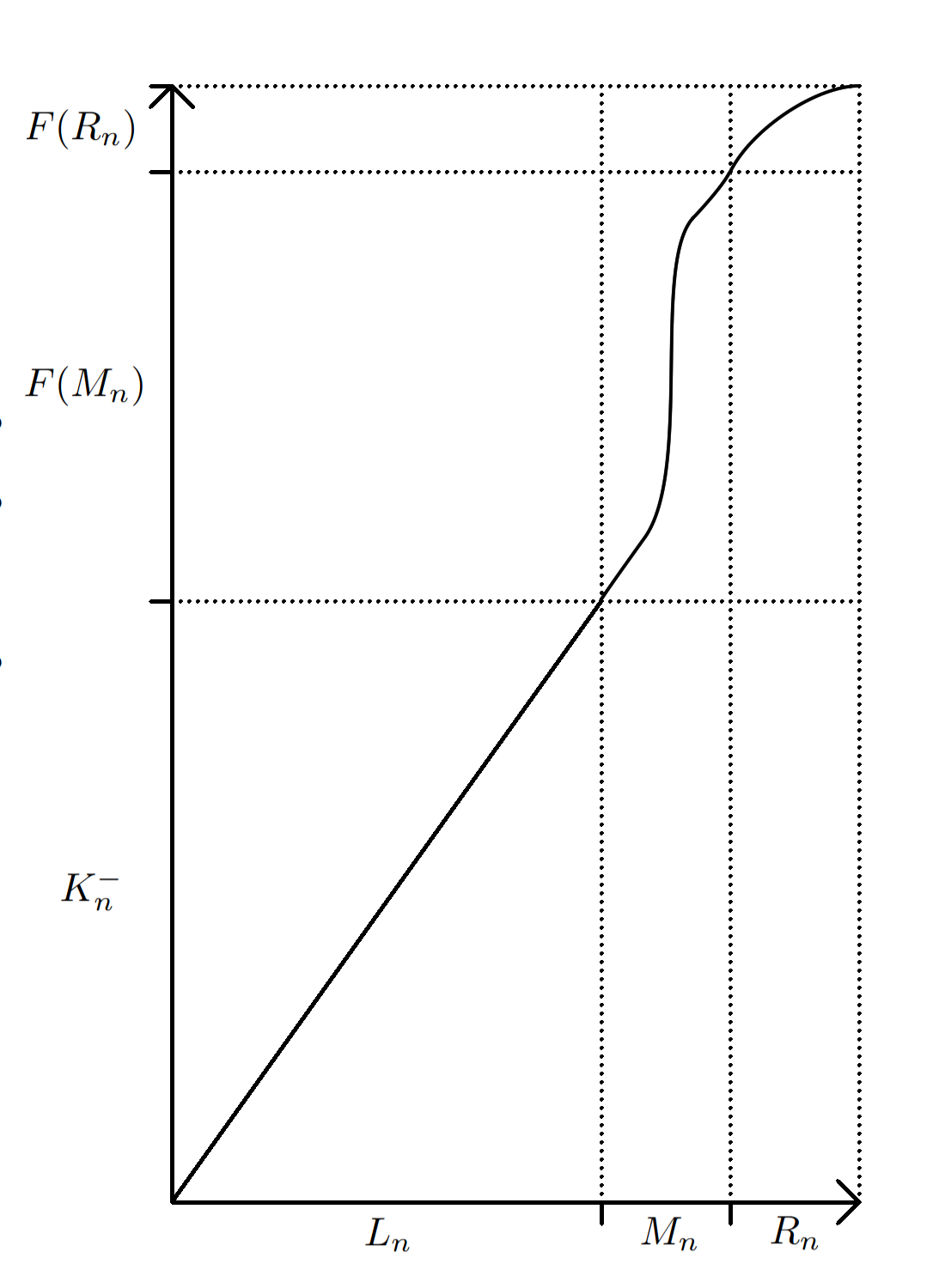}
    \caption{Sketch of the three pieces of $F|_{K_n}$}
\end{figure}

\subsection{Verification of Properties}
\label{subsection_full_verify}
We have chosen $f_1$ in Subsection \ref{subsection_f1_smooth}, 
and $F \in \mathcal{F}'$ in Lemma \ref{lem_F_smooth}.
By Proposition \ref{prop_finite_realization}, we obtain a full branch map $f: I \to I$ 
with branches defined on $\mathcal{P}=\{[0,q),[q,1)\}$.
To prove Proposition \ref{prop_result_smooth}, we must verify four properties:
$f \in \mathcal{D}$, $F \in \mathcal{F}_*^{weak}$, $f \in C^\infty(\mathbb{S}^1)$,
and $f'(p) > 1$.
\begin{lem}
    \label{lem_GP_smooth}
    $F$ has a generating partition, thus
    $F \in \mathcal{F}$. By Lemma \ref{lem_generating_f}, $f \in \mathcal{D}$.
\end{lem}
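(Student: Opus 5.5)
We need to show that the partition $\mathcal{P}^F=\{K_n\}$ is generating for $F$. Concretely, for any two distinct (non-exceptional) points $x<y$ in $[q,1]$, there should be some $M$ such that $x$ and $y$ lie in different elements of the refinement $\mathcal{P}^F_M$. Then $F\in\mathcal{F}$, and Lemma \ref{lem_generating_f} (with $\Delta=I_2$) gives that $f$ has a generating partition. Since $f$ is an orientation preserving full branch map with two branches, this makes $f$ conjugate to the doubling map, so $f\in\mathcal{D}$.

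The plan is to argue by contradiction using expansion. Suppose $x<y$ never separate. Then for every $i$ the interval $[F^i(x),F^i(y)]$ lies inside a single branch domain $K_{n_i}$. Moreover $F^i$ maps $[x,y]$ homeomorphically and monotonically onto $[F^i(x),F^i(y)]$, and these lengths are bounded by $1-q$. By item \ref{item_GP_smooth} of Lemma \ref{lem_F_smooth}, on each $K_n$ we have $F'\ge m_n\ge m_1=m>1$, except on a terminal subinterval $D_n=[z_n-d_n,z_n)$ where $F$ is concave. (For $n=1$ the branch is affine with slope $m$; for $n\ge2$, $m_n>n-1\ge1$ by item \ref{item_slope_smooth}, and we may also use $m_n\ge m$ or simply $m_n>1$, which is all we need.)

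The key step, and the main obstacle, is handling the concave tail $D_n$, where the derivative degenerates to $0$ at $z_n$ because of the flat critical point inherited from $\varphi$. I would exploit concavity together with $F(z_n^-)=1$ and $d_n<|K_1|/2$. Since $F|_{D_n}$ is concave and increasing, for a subinterval $J\subset D_n$ the ratio $|F(J)|/|J|$ is at least the slope of the chord from $J$'s left endpoint to $z_n$. That slope is at least $(1-F(z_n-d_n))/d_n$, as long as $J$ is not too close to $z_n$. Because the affine-type expansion on $L_n\cup$ (left part of $M_n$) already covers most of the image, $F(z_n-d_n)$ is still well below $1-|K_1|$. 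The image of $D_n$ therefore has length $>|K_1|$ while $d_n<|K_1|/2$, so the chord slopes in $D_n$ are $>2$ near its left end.

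The remaining danger is an interval $J$ hugging $z_n$. Its image is then near $1$, that is, inside $K_1$, and the next iterate under the affine branch with slope $m$ pushes it back to the left. Here I would use a two-step argument. If $J\subset D_n$ with $J$ close to $z_n$, then $F(J)\subset K_1$, and its position right next to $1$ means $F^2(J)$ lands near $1$ as well. The concavity of the whole branch $F|_{K_n}$ near $z_n$ (a convex-type bound from below of $1-F$) controls how long such orbits can stay.

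Alternatively, and this is what I would try first, the expansion can be bypassed entirely by a purely topological argument. Suppose the nondegenerate interval $[x,y]$ has all forward images inside single branches. The image lengths cannot shrink to zero along a subsequence of returns to the uniformly expanding parts. Instead, use that $F$ restricted to each $K_n$ is a homeomorphism onto $[q,1)$, and consider the image $F^i([x,y])$ as an interval never containing any $z_k$. Its length is nondecreasing on the non-concave parts, and on the concave parts it is bounded below by the chord argument above. Summing, $\sum_i$ of the logarithmic expansion diverges: each visit to $L_n\cup M_n$ expands by $\ge m_n>1$, and visits deep into $D_n$ are followed by a visit to $K_1$ with expansion $m$. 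Hence $|F^i([x,y])|\to\infty$, contradicting $|F^i([x,y])|\le 1-q$. This completes the separation and hence the lemma.
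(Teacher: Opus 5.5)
\textbf{There is a genuine gap, at exactly the step you flag as the main obstacle.} Your frame is fine: separate non-separated intervals by growth, then transfer to $f$ via Lemma \ref{lem_generating_f}. You also locate the difficulty correctly, in the concave tails $D_n=[z_n-d_n,z_n)$, where $F'\to 0$ at $z_n$ because of the flat profile of $\varphi$. But you never resolve it, and the argument you say you would try first does not work as stated.

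No one-step bound exists in the tail. For concave $F$ the chord slope $(1-F(u))/(z_n-u)$ is \emph{nonincreasing} in $u$. So for $u\in D_n$ it is at most (not at least) $(1-F(z_n-d_n))/d_n$, and it tends to $0$ as $u\to z_n$. A visit deep into $D_n$ can therefore contract by an arbitrarily small factor. ``Followed by a visit to $K_1$ with expansion $m$'' cannot compensate for that. The divergence of the summed logarithmic expansion is precisely what needs proof, and nothing in your proposal ties the number of subsequent $K_1$-steps quantitatively to the depth of the visit in $D_n$. The ``two-step argument'' and the ``convex-type bound from below of $1-F$'' are left as hopes. (Minor: $m_n\ge m$ is not guaranteed, since $m_2$ is barely above $1$, but your fallback $m_n>1$ is correct.)

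\textbf{The missing idea} is to treat the visit to $D_n$ together with the entire subsequent excursion in $K_1$ as one step. This means passing to the first return map $G$ of $F$ to $\Delta=\bigcup_{n\ge 2}K_n$, as the paper does.
\begin{enumerate}
\item Take $z\in D_n$ with return time $t$ to $\Delta$. Near $z$, $G=h:=A^{t-1}\circ F|_{K_n}$, where $A$ is the affine extension of $F|_{K_1}$ (slope $m$, fixing $1$).
\item $h$ is concave on $[z_n-d_n,z_n]$, and $h(z_n)=A^{t-1}(1)=1$.
\item On the other hand $h(z)=G(z)\in\Delta$, so $h(z)<z_2=1-|K_1|$.
\item By the mean value theorem there is $c\in(z,z_n)$ with $h'(c)>|K_1|/d_n>2$. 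Concavity then gives $G'(z)=h'(z)\ge h'(c)>2$.
\item Outside the tails, $G'=m^{t-1}F'\ge m_n\ge m_2>1$.
\end{enumerate}
So $G$ is uniformly expanding and has a generating partition. Lemma \ref{lem_generating_f} then passes this to $F$, and again to $f$.

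This excursion-level concavity estimate is what your contradiction argument lacks. It uses that the exit point lies left of $z_2$ while $z_n$ is sent to the fixed point $1$ of $A$. Without it, the claim $|F^i([x,y])|\to\infty$ is unsupported.
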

\begin{proof}
    Consider $\Delta = \bigcup_{n \geq 2}K_n$.
    Let $G : \Delta \to \Delta$ be the first return map of $F$.
    Again by Lemma \ref{lem_generating_f}, it is sufficient to show that
    $G$ is uniformly expanding and thus has a generating partition.
    Points in the dynamic of $F: [q,1) \to [q,1)$ outside of $\Delta$ must spend time in $K_1$, and by item \ref{item_affine_smooth} of Lemma \ref{lem_F_smooth}, $F|_{K_1}'(x) = m > 1$.
    Furthermore by chain rule,
    $G'(x) = (F^{\tau(x)})'(x) = m^{\tau(x) - 1} F'(x)$.
    Now consider $n \geq 2$, we study $G'$ and $F'$ at $K_n$.
    Recall item \ref{item_GP_smooth} and \ref{item_slope_smooth} of Lemma \ref{lem_F_smooth}.
    Outside of a subinterval of the form $[z_n -d_n,z_n)$, we have:
    \[G'(x) \geq F'(x) \geq m_n > n-1 \geq 1.\]
    Now we take a point $z \in [z_n -d_n,z_n)$.
    $(F^{\tau(z)})$ is a concave $C^\infty$ diffeomorphism on $[z_n -d_n, z_n)$.
    It extends continuously to $[z_n -d_n, z_n]$ by $z_n \mapsto 1$.
    By the definition of return time, $F^{\tau(z)} \notin K_1$.
    By the Mean Value Theorem, $\exists c \in (x,z_n)$ such that:
    \[
    (F^{\tau(z)})'(c) = [F^{\tau(z)} (z_n) - F^{\tau(z)}(z)]/[z_n - z] \geq |K_1|/|R_n|.
    \]
    Because $F^{\tau(z)}$ is convex, we obtain $G'(z) = (F^{\tau(z)})'(z) \geq (F^{\tau(z)})'(c) > |K_1|/|R_n|$.
    Recall $|R_n| \leq |K_1|/2$ by item \ref{item_GP_smooth} of Lemma \ref{lem_F_smooth}.
    Thus $G'(z) \geq 2$.
    Therefore $\forall x \in \Delta, G'(x) \geq \min\{m_2,2\}> 1$.
\end{proof}

\begin{lem}
    \label{lem_SSP_smooth}
    $F \in \mathcal{F}_*^{weak}$.
\end{lem}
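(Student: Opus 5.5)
The plan is to check the three requirements in the definition of $\mathcal{F}_*^{weak}$ one at a time: membership in $\mathcal{F}$, the correct identification of the sets $L_n$, and the existence of an increasing sequence $(p_n)$ with positive product. Convexity on each $L_n$ will then come for free. Membership $F \in \mathcal{F}$ is exactly Lemma \ref{lem_GP_smooth}, so I will simply cite it.

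First I will check that the intervals $L_n$ built in Lemma \ref{lem_F_smooth} coincide with the sets $L_n$ of (\ref{eqn_L_n}). For each $n$, the branch $F|_{K_n}$ is an orientation-preserving homeomorphism onto $[q,1)$, and $K_n^- = (q, z_{n+1})$ is an initial subinterval of $[q,1)$. Hence $\{x \in K_n : F(x) \in K_n^-\}$ is an initial subinterval of $K_n$ sharing its left endpoint $z_{n+1}$. By item \ref{item_image_smooth} of Lemma \ref{lem_F_smooth}, the interval $L_n$ of the construction, which also starts at $z_{n+1}$, is mapped onto $K_n^-$. Monotonicity then forces the two sets to be equal (up to the endpoint convention at $q$, which is irrelevant).

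With this identification, convexity is immediate. By item \ref{item_affine_smooth} (for $n=1$) and item \ref{item_slope_smooth} (for $n \ge 2$) of Lemma \ref{lem_F_smooth}, $F|_{L_n}$ is affine, hence convex. I read the "$f|_{L_n}$" in Definition \ref{defi_F_star} as $F|_{L_n}$, which is clearly what is intended.

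For the ratio condition, I will use (\ref{eqn_smooth_Ln}), which gives $|L_n|/|K_n| = 1 - 2(N+n)^{-2}$ for $n \ge 2$. This sequence is increasing in $n$. For $n = 1$, $F|_{K_1}$ is affine with slope $m$ onto $[q,1)$, so
\[
\frac{|L_1|}{|K_1|} = \frac{z_2 - q}{1-q} = 1 - \frac{1}{m} > 0
\]
by (\ref{eqn_endpoint_smooth}) and (\ref{eqn_m}). This value need not be at most $p_2$. Since the definition only requires $p_n$ to be a lower bound for the ratios, I will set
\[
p_1 := \min\{1 - 1/m,\; 1 - 2(N+2)^{-2}\}, \qquad p_n := 1 - 2(N+n)^{-2} \quad (n \ge 2).
\]
This sequence is positive and monotonically increasing. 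Its product is positive because $\sum_n (N+n)^{-2} < \infty$, as already noted in item \ref{item_ratio_smooth}.

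The only mildly delicate step is the identification of $L_n$ together with the treatment of $n=1$, and both are handled by monotonicity and a harmless minimum. Combining the three checks gives $F \in \mathcal{F}_*^{weak}$.
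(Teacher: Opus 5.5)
Your proposal is correct and follows essentially the same route as the paper: cite Lemma~\ref{lem_GP_smooth} for $F\in\mathcal{F}$, identify the constructed $L_n$ with the sets in (\ref{eqn_L_n}) via item~\ref{item_image_smooth} of Lemma~\ref{lem_F_smooth}, take $p_n$ from the explicit ratios $1-2(N+n)^{-2}$, and get convexity from $F|_{L_n}$ being affine. Your extra care at $n=1$ is a harmless refinement. You take a minimum so that $(p_n)$ is genuinely monotone, which the paper's choice $p_n=|L_n|/|K_n|$ does not explicitly check, and you use item~\ref{item_affine_smooth} rather than item~\ref{item_slope_smooth} for convexity on $L_1$.
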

\begin{proof}
    By the previous Lemma \ref{lem_GP_smooth}, we know $F \in \mathcal{F}$.
    By item \ref{item_image_smooth} of Lemma \ref{lem_F_smooth},
    the subintervals $L_n$ are consistent with the definition in (\ref{eqn_L_n}).
    By item \ref{item_ratio_smooth}, we can take $p_n = |L_n|/|K_n|$
    and satisfy Definition \ref{defi_F_star}, thus $F \in \mathcal{F}_*$.
    By item \ref{item_slope_smooth}, $F|_{L_n}$ is affine and thus convex, 
    thus $F \in \mathcal{F}_*^{weak}$.
\end{proof}

\begin{lem}
    \label{lem_smooth_flat}
    The pair $f_1, F$ glues flatly, and thus by Lemma \ref{lem_glueflat}, $f$ is $C^\infty$ at $q$.
\end{lem}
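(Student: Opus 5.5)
The plan is to check the five items of Definition \ref{defi_glueflat} in turn for the pair $(f_1,F)$ built in Subsection \ref{subsection_f1_smooth} and Lemma \ref{lem_F_smooth}. Adjustedness was already obtained in Lemma \ref{lem_F_smooth}. After the verification, Lemma \ref{lem_glueflat} gives the conclusion directly.

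The first three items come straight from the construction.
\begin{enumerate}[label=(\roman*)]
\item Near $q$ we have $f_1=1-\varphi(q-x)$, and $\varphi$ extends smoothly by $0$ at $0$ with every derivative vanishing there, so $q$ is flat by (\ref{eqn_critical_q}). On $[0,q]\setminus U$, (\ref{eqn_f_1_nonflat}) gives a $C^\infty$ diffeomorphism.
\item This is the affine piece $f_1|_{[0,b]}$ with slope $m=q/b$ from (\ref{eqn_m}).
\item By (\ref{eqn_endpoint_smooth}), $z_n-q=2(z_2-q)[(n-1)n]^{-1}\approx n^{-2}$, so $\alpha=2$. Consequently every $\beta_k$ in item (v) must be chosen in $(0,1/2)$.
\end{enumerate}

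Item (iv) is item \ref{item_derivatives_smooth} of Lemma \ref{lem_F_smooth}. For $k=0$ it is trivial, since $F$ takes values in $[q,1]$.

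Item (v) is the main obstacle, because $(f_1^{-1})^{(k)}$ blows up at $1=f_1(q)$. I will split $K_n=L_n\cup M_n\cup R_n$.

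\emph{Points with $F(x)$ away from $1$.} When $F(x)\le 1-\varphi(c)$ for a fixed $c$, so that $f_1^{-1}(F(x))$ lies outside $U$, the derivatives of $f_1^{-1}$ are uniformly bounded by compactness and (\ref{eqn_f_1_nonflat}). Any bound of the form $\exp(D_kn^{\beta_k})$ then holds.

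\emph{Points with $F(x)$ near $1$, i.e. $f_1^{-1}(F(x))\in U$.} Write $y=1-F(x)$, so $f_1^{-1}(F(x))=q-\varphi^{-1}(y)$. By (\ref{eqn_varphi_inv_bounds}), $|(f_1^{-1})^{(k)}(F(x))|\le C_k\, y^{-k}$ up to the bounded factor $\phi_k$. It therefore suffices to bound $y$ from below. The points where $F$ is closest to $1$ lie in $R_n$ and in the right third of $M_n$, where $F(x)=1-\varphi(m_{n-1}(z_n-x))$. The vanishing distance $z_n-x\to0$ makes $y\to0$ there, which seems to spoil the bound.

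I would handle this by not applying Faà di Bruno to the composition $f_1^{-1}\circ F$ blindly. On these pieces the composition is explicit: $f_1^{-1}\circ F(x)=q-m_{n-1}(z_n-x)$ on the right third of $M_n$ and on $R_n$, which is affine with slope $m_{n-1}<n$. Hence on this region all derivatives of $f_1^{-1}\circ F$ are polynomial in $n$. This is exactly what the proof of Lemma \ref{lem_glueflat} actually uses, via (\ref{eqn_flat_decompose}) and the Faà di Bruno step. So I would interpret, or if necessary slightly reformulate, item (v) as a bound on the derivatives of $f_1^{-1}\circ F|_{K_n}$, which is the quantity entering (\ref{eqn_flat_Faa}).

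The remaining region is the middle and left thirds of $M_n$, where the convolution mixes affine and $\varphi$-pieces. There $F\le 1-\varphi(m_{n-1}|M_n|/3)$ roughly, because the convolution averages values coming from the affine part and from points at distance at least about $|M_n|/3$ from $z_n$. With $|M_n|\approx n^{-5}$ and $m_{n-1}\approx n$, this gives $y\gtrsim\varphi(cn^{-4})=\exp(-9c^{-1/9}n^{4/9})$. Then
\[
y^{-k}\le \exp(D_k n^{4/9}),
\]
and $4/9<1/2=1/\alpha$. This is precisely why the exponent $9$ was chosen in $\varphi$. Combining this with item (iv) in Faà di Bruno's formula yields the needed $\exp(D_kn^{\beta_k})$ control with $\beta_k=4/9$.

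The delicate point is the lower bound on $y$ in the convolution region: $F$ must stay at distance about $\varphi(cn^{-4})$ from $1$ there. I expect this to follow from monotonicity of $F$ together with the explicit values at the left edge of the right third of $M_n$.
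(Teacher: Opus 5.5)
Your proposal is correct and matches the paper's proof. Items (i)--(iv) are read off from the construction with $\alpha=2$. For item (v) the paper likewise discards $R_n$ because $f_1^{-1}\circ F$ is affine there, which is exactly your reinterpretation of (v) as a bound on the composition entering the Fa\`a di Bruno step. It then bounds $1-F$ from below on $K_n\setminus R_n$ by $\varphi(m_{n-1}|R_n|)\ge\varphi(Cn^{-4})$, giving $\beta_k=4/9<1/2$.

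The "delicate point" you flag about the convolution region is settled by the monotonicity you anticipate. $F$ is increasing on $K_n$, and the value $F(z_n-|R_n|)=1-\varphi(m_{n-1}|R_n|)$ is known exactly, since $R_n$ is untouched by the smoothing. So that bound covers all of $K_n\setminus R_n$, including the right third of $M_n$ if smoothing alters it.
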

\begin{proof}
    Because $f_1,F$ are smooth and $F$ is adjusted to $f_1$, it is sufficient to verify each item of Definition \ref{defi_glueflat}. 
    Items \ref{item_flat_f1} and \ref{item_flat_m} are verified by choice of $f_1$ in Subsection \ref{subsection_f1_smooth}.
    Item \ref{item_flat_endpoints} is verified with $\alpha = 2$ by choice of $z_n-q \approx n^{-2}$ in Subsection \ref{subsection_smooth_partition}.
    Item \ref{item_flat_Q} is verified by item \ref{item_derivatives_smooth} of Lemma \ref{lem_F_smooth}.
    We must now verify item \ref{item_flat_P}.
    Fix a degree $k$ and an interval $K_n$. We want to find $\max_{x \in K_n} |(f_1^{-1})^{(k)}(F(x))|$.
    By the proof of Lemma \ref{lem_adjusted_gluing} we know $f_1^{-1} \circ F$ is affine for $x \in R_n$.
    So the maximum must be achieved on $x \in K_n \setminus R_n$.
    Because $f_1$ is concave at a neighborhood of $q$, the maximum must be achieved at $x = z_n - |R_n|$.
    By items \ref{item_slope_smooth}, \ref{item_Rn_smooth} and \ref{item_varphi_smooth} of Lemma \ref{lem_F_smooth} 
    we have $F(z_n - |R_n)) = \varphi(m_{n-1}|R_n|) \leq \varphi(Cn^{-4})$, for a constant $C > 0$.
    So by the bound (\ref{eqn_varphi_inv_bounds}), we obtain for a constant $C_k>0$ depending on $k$:
    \begin{align*}
    \max_{x \in K_n} |(f_1^{-1})^{(k)}(F(x))| 
    &\leq C_k \varphi(C n^{-4} )^{-k} 
    = C_k [\exp(-9 C^{-1/9} n^{4/9})]^{-k}\\
    &= C_k \exp(9k C^{-1/9} n^{4/9})
    \leq \exp(D_k n^{4/9}).
    \end{align*}
    For some $D_k > 0$.
    Which means item \ref{item_flat_P} is verified with $\beta_k = 4/9 < 1/2 = 1/\alpha$.
\end{proof}

\begin{lem}
    \label{lem_smooth_smooth}
    $f \in C^\infty(\mathbb{S}^1)$.
\end{lem}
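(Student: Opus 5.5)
To prove Lemma \ref{lem_smooth_smooth}, I would split $\mathbb{S}^1$ into the points where regularity is already known and the two gluing points $q$ and $p = 0 = 1$.

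\emph{Interior of $[0,q)$.} Here $f = f_1$ is $C^\infty$ by the choice in Subsection \ref{subsection_f1_smooth}.

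\emph{Interior of $(q,1)$.} By Lemma \ref{lem_F_smooth}, the pair $(f_1,F)$ is adjusted, so Lemma \ref{lem_adjusted_gluing} gives that $f_2$ is $C^\infty$ on $(q,1)$.

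\emph{The point $q$.} Lemma \ref{lem_smooth_flat} shows that the pair glues flatly. Hence Lemma \ref{lem_glueflat} applies: $f_2$ extends smoothly to $[q,1)$, all its one-sided derivatives vanish at $q$, and these match the flat derivatives of $f_1$ at $q^-$ from (\ref{eqn_critical_q}). So $f$ is $C^\infty$ at $q$.

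\emph{The fixed point $p$.} The remaining work is to show that the left germ of $f$ near $1$ (the last branch of $f_2$ near $z_1 = 1$) and the right germ near $0$ (which is $f_1|_{[0,b]}$, affine of slope $m$) fit together as a $C^\infty$ map of the circle. For $x$ near $1$ we have $x \in K_1$, and by (\ref{eqn_f_pieces}) with $n = 1$, $f_2|_{K_1} = F|_{K_1}$. By item \ref{item_affine_smooth} of Lemma \ref{lem_F_smooth}, $F|_{K_1}$ is affine with slope $m$ and satisfies $F(z_1^-) = 1$. Thus near $1$,
\[
f(x) = 1 + m(x-1),
\]
and after the identification $0 \sim 1$ this reads $f(x) = m x$ on a neighborhood of $p$ in $\mathbb{S}^1$. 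Both one-sided germs are the same affine map $x \mapsto mx$, so $f$ is $C^\infty$ at $p$. As a by-product, $f'(p) = m > 1$ by (\ref{eqn_m}), which is the last property needed for Proposition \ref{prop_result_smooth}.

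The main obstacle is the point $q$. Its analytic content, namely the superexponential smallness of $f_2$ and its derivatives against the subexponential growth of $(f_1^{-1})^{(k)}\circ F$, is already carried out in Lemmas \ref{lem_glueflat} and \ref{lem_smooth_flat}. What remains to check carefully is that the extension in Lemma \ref{lem_glueflat} gives genuine two-sided $C^\infty$ regularity at $q$. Since every one-sided derivative is $0$ on both sides and each side is $C^\infty$ up to $q$, the standard lemma on matching one-sided derivatives yields this. The check at $p$ is just the bookkeeping of the affine pieces above.
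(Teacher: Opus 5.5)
Your proposal is correct and follows essentially the same route as the paper's proof: $f_1$ is smooth on $(0,q)$, Lemma \ref{lem_adjusted_gluing} handles $(q,1)$, and Lemmas \ref{lem_smooth_flat} and \ref{lem_glueflat} handle $q$. At $p$ you use the same observation as the paper: $F|_{K_1}=f_2|_{K_1}$ (item \ref{item_affine_smooth} of Lemma \ref{lem_F_smooth}) and $f_1|_{[0,b]}$ are both affine of slope $m$, so $f(t)=mt$ near $p$ in circle coordinates.
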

\begin{proof}
    $f_1$ is $C^\infty$ on $(p,q)$ by construction.
    Because $F$ is adjusted to $f_1$, by Lemma \ref{lem_adjusted_gluing}, $f_2$ is $C^\infty$ on $(q,p)$.
    By Lemma \ref{lem_smooth_flat}, $f$ is $C^\infty$ at $q$.
    It remains to verify smoothness at $p$.
    By choice of $f_1$ and item \ref{item_flat_f1} of Lemma \ref{lem_F_smooth}, $f$ is affine with slope $m$
    on the interval $K_1 \cup [p,b)$, which is a neighborhood of $p$. 
    Thus $f$ is $C^\infty$ at $p=0=1$.
\end{proof}


\begin{proof}[Proof of Proposition \ref{prop_result_smooth}]
    We have chosen $f_1 : [0,q) \to [0,q)$ in Subsection \ref{subsection_f1_smooth}.
    and $F \in \mathcal{F}'$ in Lemma \ref{lem_F_smooth}.
    We obtain $f: \mathbb{S}^1 \to \mathbb{S}^1$ from Proposition \ref{prop_finite_realization}.
    Lemmas \ref{lem_GP_smooth}, \ref{lem_SSP_smooth} and \ref{lem_smooth_smooth} respectively give us $f \in \mathcal{D}$, $F \in \mathcal{F}_*^{weak}$, and $f \in C^\infty(\mathbb{S}^1)$.
    Note $f'(p) = m > 1$ as a consequence of the proof of Lemma \ref{lem_smooth_smooth}.
\end{proof}

\section{Construction for Proposition \ref{prop_result_full}} 
\label{section_construction_full}
In this section we prove Proposition \ref{prop_result_full}.
Our strategy is to choose a suitable pair $(f_1,F)$, then we obtain a unique $f$ by Proposition \ref{prop_finite_realization}.
First, fix a point $q \in (0,1/2)$, and define $\mathcal{P} := \{I_1 = [0,q), I_2 = [q,1)\}$.

\subsection{Choice of $f_1$}
\label{subsection_f1_full}
We choose $f_1:I_1 \to I$ to be a convex $C^\infty$ orientation preserving homeomorphism such that
$\lim_{x \to q^{-}} f'(x) = +\infty$, and there exists a point $b \in (0,q/2)$, 
such that the restriction $f_1|_{[0,b)}$ maps to $[0,q)$, 
and it is affine with slope $m := q/b > 2$.
Note that an explicit formula for $f_1$ can be written with standard algebraic functions such as $x \to -(q-x)^{1/2}$ and smoothing techniques, but it is not important to have such an explicit formula.

\subsection{Choice of Partition $\mathcal{P}^F$}
Before we choose $F$, we must choose a partition $\mathcal{P}^F= \{K_n\}_{n \geq 1}$.
This is done by choosing their endpoints $z_n$ (as in (\ref{eqn_endpoints})).
Let $a \in (0,1/2)$, then define:
\begin{align}
    \label{eqn_endpoint_full}
    z_1:= 1,\quad z_2 := 1-(1-q)/m, \quad
    \forall n \geq 3, z_n := q + (z_2 - q) a^{n-2}.
\end{align}
For future reference we observe their lengths:
\begin{align}
    \label{eqn_length_full}
    |K_1| &= z_2 - z_1 = (1-q)/m,\\
    \nonumber \forall n \geq 2, |K_n| &= z_n - z_{n+1} = (z_2 - q)(a^{n-2} -a^{n-1}).
\end{align}
Recall the union $K_n^- = \bigcup_{k \geq n+1} K_n$ defined in (\ref{eqn_K_n_minus}).
We note its length.
\begin{align}
    \label{eqn_union_length_full}
    |K_n^-| &= z_{n+1} - q = (z_2 - q)a^{n-1}.
\end{align}

\subsection{Choice of $F$}
\label{subsection_full_F}
The following Lemma \ref{lem_F_full} specifies a choice of $F \in \mathcal{F}'$ with several desired properties for further reference.

\begin{lem}
    \label{lem_F_full}
    There exists a map $F \in \mathcal{F}'$ with branch domains given by $\mathcal{P}^F$
    such that it is adjusted to $f_1$ (Definition \ref{defi_adjusted}), and furthermore $F$ together with the subintervals $L_n,R_n$ given in Definition \ref{defi_adjusted} satisfy:
    \begin{enumerate}[label= (\roman*)]
        \item \label{item_affine_full} The first branch $F|_{K_1}$ is affine with slope $m_1 = m>1$.
        \item \label{item_ratio_full} $\prod_{n \geq 1} |L_n| / |K_n| > 0$.
        \item \label{item_image_full} $\forall n \geq 1, F(L_n) = K_n^- = \bigcup_{k \geq n+1} K_n$
        \item \label{item_slope_full} For $n \geq 2$, $F|_{L_n}$ is affine with constant slope $m_n$ and $m_n \geq a/(1-a)>1$.
        \item \label{item_convex_full} Each branch $F|_{K_n}$ is convex.
    \end{enumerate}
\end{lem}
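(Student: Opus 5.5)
The plan is to build $F$ branch by branch, following the proof of Lemma \ref{lem_F_smooth}, but now every gluing must be done convexly, and the singularity of $f_1$ at $q$ (i.e. $f_1'(x)\to+\infty$ as $x\to q^-$) must supply the slack. On $K_1$ I take $F|_{K_1}$ affine onto $[q,1)$. By (\ref{eqn_length_full}) its slope is $(1-q)/|K_1|=m$, which gives item \ref{item_affine_full}, and $L_1:=F|_{K_1}^{-1}(K_1^-)$. For $n\ge 2$ I fix a summable sequence $\varepsilon_n\in(0,1)$, say $\varepsilon_n=(N+n)^{-2}$ with $N$ large, and split $K_n=L_n\cup M_n\cup R_n$ from left to right. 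Here $|L_n|=(1-\varepsilon_n)|K_n|$, and $|M_n|,|R_n|$ are still to be chosen with $|M_n|+|R_n|=\varepsilon_n|K_n|$.

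On $L_n$ I define $F$ to be the affine map onto $K_n^-$, so item \ref{item_image_full} holds and item (i) of Definition \ref{defi_adjusted} holds. By (\ref{eqn_length_full}) and (\ref{eqn_union_length_full}) the slope is
\[
m_n=\frac{|K_n^-|}{|L_n|}=\frac{a}{(1-a)(1-\varepsilon_n)}\ \ge\ \frac{a}{1-a},
\]
which gives item \ref{item_slope_full}. Item \ref{item_ratio_full} follows from $\sum\varepsilon_n<\infty$, exactly as in Lemma \ref{lem_F_smooth}. On $R_n=(z_n-|R_n|,z_n)$ I set $F(x)=f_1(q+m_{n-1}(x-z_n))$, as forced by item (ii) of Definition \ref{defi_adjusted}. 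This is well defined once $m_{n-1}|R_n|<q$. It is convex because $f_1$ is convex, and it is $C^\infty$ on $R_n$. Convexity on $L_n$ and $R_n$ is therefore automatic, and items (iii) and (i) of Definition \ref{defi_adjusted} hold by the choice of $f_1$.

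The main step, and the expected obstacle, is to fill $M_n=[u,v]$ with a $C^\infty$ convex function that glues $C^\infty$-smoothly to both sides. The boundary data are as follows. At $u$ the value is $q+|K_n^-|$ and the slope is $m_n$. At $v$ the value is $f_1(q-m_{n-1}|R_n|)$, which is close to $1$, and the slope is $s_n:=m_{n-1}f_1'(q-m_{n-1}|R_n|)$. A convex interpolant exists only if
\[
m_n<\sigma_n<s_n,\qquad \sigma_n:=\frac{F(v)-F(u)}{|M_n|}.
\]
I first fix $|M_n|=\varepsilon_n|K_n|/2$. Then $\sigma_n\gtrsim (1-q-|K_n^-|)/|M_n|$ is huge compared to $m_n\approx a/(1-a)$, and this lower bound does not depend on $|R_n|$ once $|R_n|$ is small. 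Next I shrink $|R_n|$ (and enlarge $M_n$ correspondingly, or keep a small gap absorbed into $M_n$) so that $s_n>\sigma_n$. This is possible because $f_1'(x)\to\infty$ as $x\to q^-$, while $\sigma_n$ stays bounded as $|R_n|\to0$.

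To build the smooth interpolant I prescribe $F''$ on $M_n$. I take a smooth partition of unity $\chi_L+\chi_0+\chi_R=1$ on $M_n$, with $\chi_L=1$ near $u$ and $\chi_R=1$ near $v$. I set
\[
F''=\chi_R\,(\text{second derivative of the }R_n\text{ formula extended slightly left})+c_1b_1+c_2b_2,
\]
where the $L_n$ contribution is zero because that piece is affine, and $b_1,b_2$ are nonnegative bumps supported in the interior with distinct centres. Integrating twice from $u$ with initial value and slope taken from $L_n$, the two linear conditions (slope equals $s_n$ at $v$, value equals $F(v)$ at $v$) form a $2\times2$ system for $(c_1,c_2)$. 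The strict inequalities above make its solution nonnegative, possibly after taking the $\chi_R$ region short. With this choice $F''\ge0$, and all derivatives match the neighbouring pieces near $u$ and $v$. Hence each branch is $C^\infty$ (item (iv) of Definition \ref{defi_adjusted}) and convex (item \ref{item_convex_full}), and $F\in\mathcal F'$ since each branch maps $K_n$ increasingly onto $[q,1)$.
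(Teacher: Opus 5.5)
Your construction is correct, up to one detail below. It follows the same skeleton as the paper's proof:
\begin{itemize}
\item the same splitting $K_n=L_n\cup M_n\cup R_n$;
\item the same affine piece onto $K_n^-$ with slope $m_n=|K_n^-|/|L_n|$;
\item the same rescaled copy of $f_1$ on $R_n$;
\item the same use of $f_1'(x)\to+\infty$ as $x\to q^-$, making the right piece steep enough for a convex connection by taking $R_n$ short.
\end{itemize}

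You differ in how the middle piece is smoothed. The paper extends both graphs into $M_n$ and joins them by a line segment; this is convex once $\delta_n$ is small, which amounts to your condition $m_n<\sigma_n<s_n$. It then convolves with a positive bump, using that convolution preserves convexity. This is quick, but convolution with a fixed kernel does not leave the strictly convex piece on $R_n$ unchanged. So the exact identity $F|_{R_n}(x)=f_1(q+m_{n-1}(x-z_n))$ needed for adjustedness requires some further localisation there. Your prescription of $F''\ge 0$, agreeing exactly with the neighbouring pieces near $u$ and $v$, gives $C^\infty$ gluing and exact adjustedness automatically. The cost is solving the $2\times 2$ system.

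The detail: ``distinct centres'' does not guarantee $c_1,c_2\ge 0$. Take the $\chi_R$ region short and normalise $\int b_i=1$. The system then says the added curvature has total mass $s_n-m_n$ and barycentre near
\[
t^*=v-|M_n|\,\frac{\sigma_n-m_n}{s_n-m_n}.
\]
This $t^*$ is the abscissa where the tangent lines at $u$ and $v$ meet. So the barycentres of $b_1$ and $b_2$ must lie on either side of $t^*$, for example $b_1$ concentrated near $u$ and $b_2$ near $v$. That is possible precisely because your strict inequalities place $t^*$ in the open interval $(u,v)$. Continuity in the length of the $\chi_R$ region then handles the perturbation.

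Separately, neither you nor the paper checks the last inequality in item (iv). The bound $a/(1-a)>1$ holds only for $a>1/2$, whereas (\ref{eqn_endpoint_full}) takes $a\in(0,1/2)$. In that range your $m_n=a/((1-a)(1-\varepsilon_n))$ is below $1$ once $\varepsilon_n$ is small. This is a defect of the stated parameter range, not of your construction, which goes through unchanged for $a\in(1/2,1)$.
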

\begin{proof}
    To satisfy \ref{item_affine_full}, let $F|_{K_1} : K_1 \to (q,1)$ to be affine. 
    By (\ref{eqn_length_full}), it has constant slope $(1-q)/K_1 = m$.
    To satisfy item \ref{item_left_adjusted} of Definition \ref{defi_adjusted} and \ref{item_image_smooth} for $n = 1$, let $L_1$ be the subset of $K_1$ that maps to $K_1^-$.
    Choose a sequence $(p_n)_{n \geq 2}$ such that $\prod_{n \geq 2} p_n > 0$.
    For $n \geq 2$, we define the subinterval $L_n \subset K_n$ as follows:
    \begin{equation}
        \label{eqn_full_Ln}
        L_n := [z_{n+1}, z_{n+1} + p_n|K_n|), \quad |L_n| = p_n|K_n|.
    \end{equation}
    This choice of $L_n$ satisfies \ref{item_ratio_full}.
    To satisfy item \ref{item_left_adjusted} of Definition \ref{defi_adjusted} and \ref{item_image_full} for $n \geq 2$, 
    let $F|_{L_n} : L_n \to K_n^-$ to be affine.
    We compute by (\ref{eqn_length_full}), (\ref{eqn_union_length_full}), and (\ref{eqn_full_Ln}):
    \begin{equation}
        \label{eqn_full_mn}
        m_n = |K_n^-|/|L_n| = p_n^{-1}a/(1-a).
    \end{equation}
    For each $n \geq 2$, let $\delta_n > 0$ be a small number to be fixed later, then let $R_n = (z_n - \delta_n, z_n)$.
    To satisfy item \ref{item_right_adjusted} of Definition \ref{defi_adjusted}, we have
    $F|_{R_n}(x) = f( q + m_n(x-z_n))$.
    Recall by Subsection \ref{subsection_f1_full} that $f_1$ is convex and $\lim_{x \to q^-f'(x)} = \infty$.
    We now extend the graphs of $F|_{L_n}$ and $F|_{R_n}$ into $K_n \setminus (L_n \cup R_n)$, then connect the two graphs with a line segment to create a nonsmooth graph.
    If we assume $\delta_n$ to be small enough, then the graph is convex.
    We smoothen the graph by convolution with a smooth positive bump function.
    This gives us a smooth branch $F|_{K_n}$ which satisfies item \ref{item_smooth_adjusted} of Definition \ref{defi_adjusted}.
    Because convolution with positive kernels preserve convexity, we satisfy \ref{item_convex_full}.
\end{proof}

\begin{figure}[ht]
    \centering
    \includegraphics[width=0.4\linewidth]{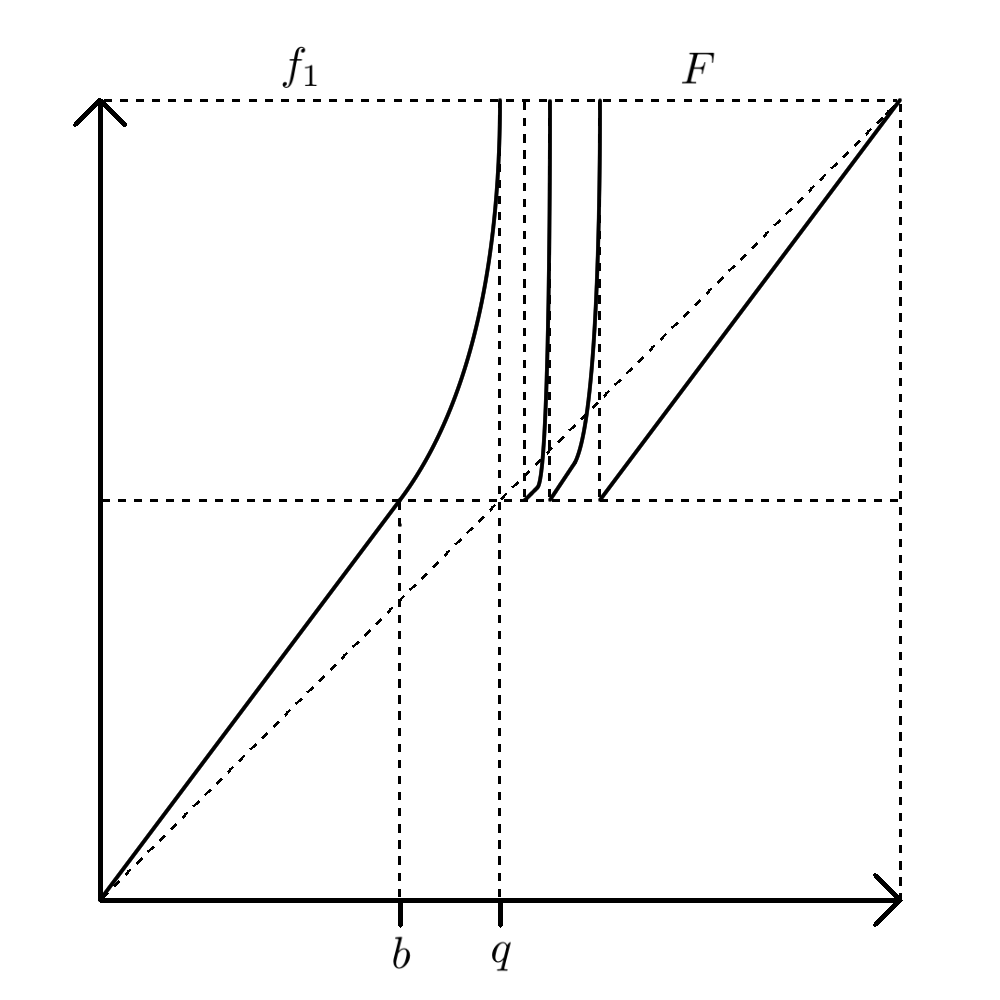}
    \caption{Sketch of $(f_1,F)$ with $q = 1/2, b = 3/8$.}
\end{figure}

\subsection{Verification of Properties}
We have chosen $f_1$ in Subsection \ref{subsection_f1_full}, 
and $F \in \mathcal{F}'$ in Lemma \ref{lem_F_full}.
By Proposition \ref{prop_finite_realization}, we obtain a full branch map $f: I \to I$ 
with branches defined on $\mathcal{P}=\{[0,q),[q,1)\}$.
To prove Proposition \ref{prop_result_full}, we must verify three properties:
$f \in \mathcal{D}$, $F \in \mathcal{F}_*^{strong}$, 
$f \in C^\infty(\mathbb{S}^1\setminus \{q\})$, and $f'(p) > 1$.
\begin{lem}
    \label{lem_GP_full}
    $F$ has a generating partition, thus
    $F \in \mathcal{F}$ and $f \in \mathcal{D}$.
\end{lem}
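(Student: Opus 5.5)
The plan is to follow the proof of Lemma \ref{lem_GP_smooth} closely, and in fact the convexity in Lemma \ref{lem_F_full} makes it simpler. We show that $F$ is uniformly expanding, perhaps after passing to a first return map. Then the diameters of the elements of the refined partitions $\mathcal{P}^F_M$ shrink to zero, so $F$ has a generating partition. Hence $F \in \mathcal{F}$, and Lemma \ref{lem_generating_f_realized} (equivalently Lemma \ref{lem_generating_f} applied with $\Delta = I_2$) gives $f \in \mathcal{D}$.

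First, on $K_1$ the map $F$ is affine with slope $m > 2$, by item \ref{item_affine_full} of Lemma \ref{lem_F_full}. Next fix $n \geq 2$. By item \ref{item_convex_full}, $F|_{K_n}$ is convex, so its derivative is nondecreasing on $K_n$. Its smallest value is therefore attained at the left end, that is on $L_n$, where by item \ref{item_slope_full} it equals the constant slope $m_n \geq a/(1-a)$. It would then follow that $F'(x) \geq \min\{m, \inf_n m_n\}$ for every $x \in [q,1)$, and the whole argument reduces to checking that this bound exceeds $1$.

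The main obstacle is exactly this check. By (\ref{eqn_full_mn}) we have $m_n = p_n^{-1} a/(1-a)$, but with $a \in (0,1/2)$ the quantity $a/(1-a)$ is less than $1$, so the inequality ``$a/(1-a) > 1$'' asserted in item \ref{item_slope_full} cannot be used literally. Two routes would repair this. The first is to use the freedom in choosing $(p_n)$: take $p_n \to 1$ with $\prod p_n > 0$. Then $m_n = p_n^{-1} a/(1-a)$ is bounded below by a positive constant, but it may still be below $1$, so direct expansion fails on $L_n$.

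The second route, which I would use, imitates Lemma \ref{lem_GP_smooth} with an induced map. Let $\Delta$ be a union of branch domains and $G$ the first return map of $F$ to $\Delta$; every branch of $G$ is a composition of branches of $F$. Because every branch of $F$ is convex and full, the Mean Value Theorem bounds the derivative near the right endpoint of a branch below by (image length)/(domain length). The images are long while the domains shrink geometrically like $a^{n}$, so the derivative is large there. A point landing in $L_n$ is sent into $K_n^-$, which deepens its index; consecutive such steps multiply slopes $m_n m_{n+1}\cdots$ of size roughly $(a/(1-a))^k$, and this product is controlled by the ratio $|K_n^-|/|K_{n+k}|$. More cleanly, one can show directly that the diameters of the cylinders of $\mathcal{P}^F_M$ tend to zero. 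A cylinder that stays in the $L$-parts is an affine preimage of some $K_j^-$, whose length tends to zero geometrically as the index grows. A cylinder that visits the convex right parts contracts by the MVT bound. Combining these two cases gives that all cylinders of $\mathcal{P}^F_M$ have diameters tending to $0$, which is the generating property, and the conclusion $f \in \mathcal{D}$ follows as above.
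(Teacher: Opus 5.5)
Your first paragraph is exactly the paper's proof. By convexity $F'\ge\min\{m,\inf_n m_n\}$ on $[q,1)$, so $F$ is uniformly expanding and its cylinders shrink.

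You are also right that this fails for $a\in(0,1/2)$ as literally written. The inequality $a/(1-a)>1$ in item (iv) of Lemma~\ref{lem_F_full} is false. Moreover $\prod p_n>0$ forces $p_n\to1$, so $m_n=p_n^{-1}a/(1-a)\to a/(1-a)<1$: $F$ is contracting on $L_n$ for \emph{all} large $n$, not merely possibly. The paper's one-line argument rests on exactly this inequality.

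The simplest repair, which you did not consider, is to take $a\in(1/2,1)$. Nothing else in Section~\ref{section_construction_full} uses $a<1/2$. Then $m_n\ge a/(1-a)>1$, and your first paragraph is a complete proof.

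Your second route, however, has a genuine gap. The claim that a cylinder visiting the convex right parts is contracted by the Mean Value Theorem bound is not true. Convexity only makes $F'$ large near $z_n$. But $F|_{K_n}$ is $C^1$ with slope $m_n<1$ on $L_n$, so $F'<1$ on a whole strip just to the right of $L_n$. Points of that strip are mapped onto the left end of $L_n$, then right next to $q$ into some deep $K_k$, where the same can happen again. So an orbit may alternate between $L$-parts and such strips with derivative $<1$ at every step. Your ``combining the two cases'' is precisely this unhandled case.

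The induced map $G$ does not rescue this. You never specify $\Delta$, and for the natural choice $\Delta=\bigcup_{n\ge2}K_n$ (as in Lemma~\ref{lem_GP_smooth}), $G$ still has branches of slope $m_n<1$. Only your pure-$L$ computation is sound.

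If you want to keep $a<1/2$, the missing idea is to use convexity of the compositions rather than pointwise derivative bounds. Suppose $[\alpha,\beta]$ were a nondegenerate interval of common itinerary $(n_i)$.
\begin{enumerate}
\item On the depth-$M$ cylinder, $G=F^M$ is convex and sends its left endpoint to $q$. Monotonicity of $G'$ gives $\bigl(G(\beta)-G(\alpha)\bigr)/\bigl(G(\alpha)-q\bigr)\ge(\beta-\alpha)/(\alpha-q)>0$. Hence $F^M([\alpha,\beta])$ fills a fixed positive proportion of $K_{n_M}$ for every $M$.
\item A step with $n_{M+1}\le n_M$ requires $F^M([\alpha,\beta])\subset K_{n_M}\setminus L_{n_M}$. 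Since $p_n\to1$, such steps occur only at bounded indices.
\item Each $L$-step multiplies that proportion by at least $a^{-(n_{M+1}-n_M-1)}/(1-a)$. Together with the previous point, the indices stay bounded and the lengths $|F^M([\alpha,\beta])|$ are bounded below.
\item Hence two images intersect, and, being infinite cylinders, they coincide. So some $g=F^{j-i}$, convex on its cylinder $[c_0,c_1)$ with $g(c_0)=q<c_0$, fixes both endpoints of a nondegenerate interval. This is impossible for a convex map.
\end{enumerate}
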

\begin{proof}
    By item \ref{item_affine_full}, \ref{item_slope_full} and \ref{item_convex_full}
    of Lemma \ref{lem_F_full}, for all $x \in (q,1)$, $F'(x) > > 1$.
    Thus $F$ is uniformly expanding, and has a generating partition.
\end{proof}

\begin{lem}
    \label{lem_SSP_full}
    $F \in \mathcal{F}_*^{strong}$.
\end{lem}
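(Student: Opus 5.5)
This is a matter of checking the clauses of Definition \ref{defi_F_star} one at a time for the map $F$ built in Lemma \ref{lem_F_full}.

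First, $F \in \mathcal{F}$ holds by Lemma \ref{lem_GP_full}.

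Second, we must check that the intervals $L_n$ built in the proof of Lemma \ref{lem_F_full} agree with the intrinsic definition (\ref{eqn_L_n}), namely $L_n = \{x \in K_n : F(x) \in K_n^-\}$. By item \ref{item_image_full}, $F(L_n) = K_n^-$, and $L_n$ shares its left endpoint $z_{n+1}$ with $K_n$. Since $F|_{K_n}$ is an orientation-preserving homeomorphism onto $[q,1)$, it maps the left subinterval $L_n$ onto the left subinterval $K_n^- = (q,z_{n+1})$, and it maps $K_n \setminus L_n$ onto $[z_{n+1},1)$, which is disjoint from $K_n^-$. So the two descriptions of $L_n$ coincide. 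For $n=1$ this holds by the construction of $L_1$.

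Third, we need a monotonically increasing sequence $(p_n)$ with $|L_n|/|K_n| \geq p_n$ and $\prod p_n > 0$. For $n\ge 2$ the ratio $|L_n|/|K_n|$ equals the chosen $p_n$ from (\ref{eqn_full_Ln}), and the product is positive by item \ref{item_ratio_full}. The one point needing care is monotonicity. The sequence in the proof of Lemma \ref{lem_F_full} was only required to have positive product, so I would fix it there to be increasing, for instance $p_n = 1 - c\,n^{-2}$ with $c$ small. This choice should be compatible with the rest of the construction: item \ref{item_slope_full} needs $m_n = p_n^{-1}a/(1-a)$, and since $p_n \le 1$ we get $m_n \ge a/(1-a)$, which is all the proof of Lemma \ref{lem_GP_full} uses. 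The value $p_1 := |L_1|/|K_1|$ is a fixed positive number. If it exceeds $p_2$, we replace it by any positive number below $p_2$; the inequality $|L_1|/|K_1| \ge p_1$ still holds and the product stays positive. Hence $F \in \mathcal{F}_*$.

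Finally, item \ref{item_convex_full} gives that each branch $F|_{K_n}$ is convex, which is exactly the defining property of $\mathcal{F}_*^{strong}$. Convexity is preserved under the identification of each branch with its associated piece of $f$, because by (\ref{eqn_piece_ID}) each branch of $F$ is precisely the graph piece $\phi_{n,1}$. Combining the three checks, $F \in \mathcal{F}_*^{strong}$.

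The only potential obstacle is the monotonicity requirement on $(p_n)$. It is resolved by choosing the sequence in Lemma \ref{lem_F_full} increasing from the start, which costs nothing elsewhere in the construction.
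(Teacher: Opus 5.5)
Your proof follows the paper's proof: $F\in\mathcal F$ from Lemma \ref{lem_GP_full}, agreement of the constructed $L_n$ with (\ref{eqn_L_n}) via item \ref{item_image_full}, $p_n=|L_n|/|K_n|$ via item \ref{item_ratio_full}, and convexity from item \ref{item_convex_full}. Your extra step of choosing $(p_n)$ increasing in Lemma \ref{lem_F_full} and capping $p_1$ improves on the paper's proof, because the paper only imposes $\prod p_n>0$ there, and a positive product alone does not give an increasing minorant with positive product. One caveat comes from the paper rather than from you: $m_n\ge a/(1-a)$ gives expansion only if $a>1/2$, while the paper fixes $a\in(0,1/2)$, so the uniform-expansion argument of Lemma \ref{lem_GP_full}, which both you and the paper rely on, only works as written for $a\in(1/2,1)$.
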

\begin{proof}
    By the previous Lemma \ref{lem_GP_full}, we know $F \in \mathcal{F}$.
    By item \ref{item_image_full} of Lemma \ref{lem_F_full},
    the subintervals $L_n$ are consistent with the definition in (\ref{eqn_L_n}).
    By item \ref{item_ratio_full}, we can take $p_n = |L_n|/|K_n|$
    and satisfy Definition \ref{defi_F_star}, thus $F \in \mathcal{F}_*$.
    By item \ref{item_convex_full}, each branch $F|_{K_n}$ is convex, thus
    $F \in \mathcal{F}_*^{strong}$.
\end{proof}

\begin{lem}
    \label{lem_smooth_full}
    $f \in C^\infty(\mathbb{S}^1 \setminus \{q\})$.
\end{lem}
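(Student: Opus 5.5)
We need $f$ to be $C^\infty$ on $\mathbb{S}^1\setminus\{q\}$, i.e. on the open arc $(0,q)$, on the open arc $(q,1)$, and at the point $p=0=1$. The argument follows the smooth case (Lemma \ref{lem_smooth_smooth}), except that no flat gluing at $q$ is needed.

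First, $f_1$ is $C^\infty$ on $[0,q)$ by the choice in Subsection \ref{subsection_f1_full}. In particular it is smooth on $(0,q)$, and it is affine with slope $m$ on $[0,b)$. Its derivative blows up only as $x\to q^-$, and $q$ is excluded.

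Second, on $(q,1)$ we invoke the Adjusted Gluing Lemma \ref{lem_adjusted_gluing}. Its hypotheses are exactly that the pair $(f_1,F)$ is adjusted in the sense of Definition \ref{defi_adjusted}, and Lemma \ref{lem_F_full} asserts this. Concretely:
\begin{itemize}
\item item \ref{item_left_adjusted} holds because $F$ is affine on each $L_n$;
\item item \ref{item_right_adjusted} holds by the definition of $F|_{R_n}$ as a horizontally rescaled copy of $f_1$ near $q$;
\item item \ref{item_smooth_f1_adjusted} holds because $f_1$ is a $C^\infty$ diffeomorphism of $[0,q)$ onto $[0,1)$, with positive derivative by convexity and the affine start;
\item item \ref{item_smooth_adjusted} holds because the branches were smoothed by convolution.
\end{itemize}
One point should be checked. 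In Lemma \ref{lem_F_full} the right pieces are written with the slope $m_n$, whereas Definition \ref{defi_adjusted} requires the slope $m_{n-1}$ of the neighbouring left piece. We read the construction with $m_{n-1}$, which is what the gluing computation in the proof of Lemma \ref{lem_adjusted_gluing} uses, and choose $\delta_n$ small enough that $q+m_{n-1}(x-z_n)$ stays in the domain where $f_1$ is defined. With that, the lemma gives that $f_2$ is $C^\infty$ on $(q,1)$.

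Third, smoothness at $p$. On $[0,b)$ the map $f_1$ is affine with slope $m$. By item \ref{item_affine_full} of Lemma \ref{lem_F_full}, $F|_{K_1}$ is affine with slope $m$. Since the return time on $K_1$ is $1$, we have $f_2|_{K_1}=F|_{K_1}$, so $f$ is affine with slope $m$ on $K_1$ as well. Both pieces send $p$ to $p$: $f_1(0)=0$, and $F|_{K_1}(1^-)=1$. Hence on the neighbourhood $K_1\cup[0,b)$ of $p$ in $\mathbb{S}^1$, $f$ is a single affine map of slope $m$, so it is $C^\infty$ there, and $f'(p)=m>2>1$ comes for free.

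Everything here is bookkeeping from earlier results. The only delicate step is the consistency of the right-piece slope ($m_{n-1}$ versus $m_n$), together with the requirement that $\delta_n$ be small enough for the rescaled copy of $f_1$ to be well defined and to preserve convexity. Once that is confirmed, Lemma \ref{lem_adjusted_gluing} does all the work on $(q,1)$.
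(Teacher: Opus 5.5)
Your proof is correct and follows the paper's argument: $f_1$ is smooth on $(0,q)$, the Adjusted Gluing Lemma~\ref{lem_adjusted_gluing} makes $f_2$ smooth on $(q,1)$, and $f$ is a single affine map of slope $m$ on the neighbourhood $K_1\cup[0,b)$ of $p$. You are also right to use the slope $m_{n-1}$ on the right pieces rather than the $m_n$ written in the proof of Lemma~\ref{lem_F_full}, since $m_{n-1}$ is what Definition~\ref{defi_adjusted} and the computation in Lemma~\ref{lem_adjusted_gluing} require.
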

\begin{proof}
    $f_1$ is $C^\infty$ on $(p,q)$ by construction.
    By Lemma~\ref{lem_adjusted_gluing}, because $F$ is adjusted to $f_1$, 
    we have $f_2$ is $C^\infty$ on $(q,p)$.
    We must check the regularity of $f$ at $p$.
    By item \ref{item_affine_full} of Lemma \ref{lem_F_full}
    $f$ is affine with slope $m$ on the interval $K_1 \cup [p,b)$, which is a neighborhood of $p$. 
    Therefore $f \in C^\infty(\mathbb{S}^1 \setminus \{q\})$.
\end{proof}

\begin{proof}[Proof of Proposition \ref{prop_result_full}]
    We have chosen $f_1 : [0,q) \to [0,q)$ in Subsection \ref{subsection_f1_smooth}.
    and $F \in \mathcal{F}'$ in Lemma \ref{lem_F_smooth}.
    We obtain $f: I \to I$ from Proposition \ref{prop_finite_realization}.
    Lemmas \ref{lem_GP_full}, \ref{lem_SSP_full} and \ref{lem_smooth_full} respectively give us $f \in \mathcal{D}$, $F \in \mathcal{F}_*^{strong}$, 
    and $f \in C^\infty(\mathbb{S}^1\setminus \{q\})$.
    Note $f'(p) = m > 1$ by item \ref{item_affine_full} of Lemma \ref{lem_F_full}.
\end{proof}

\section{Remarks on the Role of Flat Critical Points}
\label{section_flat}
In Section \ref{section_realization} we introduced
Lemma \ref{lem_glueflat}, which gives sufficient conditions on $(f_1,F)$ 
such that $f$ is smooth at $q$ with $q$ a flat critical point.
Because we used it to construct $f$ for Theorem \ref{thm_full}, $f$ must have a flat critical point $q$.
We now make brief informal remarks on the role of the flatness of $q$ in both our results,
and compare our strategy with results in the literature,
in particular the unimodal maps with flat critical point.

\subsection{Flatness and Regularity}
With our methods, $q$ being flat is necessary for regularity.
Assume $f \in C^\infty(\mathbb{S}^1)$ and $F \in \mathcal{F}_*$.
Because of how $\mathcal{F}_*$ is defined, $F'$ must be small in the subintervals $L_n \subset K_n$, and large outside of it.
As a consequence, $\max_{x,y \in K_n} f'(x)/f'(y) \to \infty$ as $n \to \infty$. 
Thus all derivatives of $q$ must vanish.

However, $q$ being flat is not strictly necessary to have a Sisyphus Attractor.
We can take the construction in Section \ref{section_construction_full}, and choose $a < b/q$.
Observe $z_n - q \approx a^n$, and $f(z_n) - f(q) \approx (b/q)^n$.
We obtain $\lim_{x \to q^+} [f(x)-f(q)]/[x-q] = \infty.$
Even though $f'(q)$ is not even defined, we still have a Sisyphus Attractor.

\subsection{A Difference of Strategy}
We now compare our Sisyphus Attractor to the unimodal maps with flat critical points shown in \cite{Zwe04}.
Two particular assumptions are needed to find a Sisyphus Attractor in that setting.
The first is nonpositive Schwarzian derivative, which implies some type of bounded distortion.
The second one is $\int \log|f'| = -\infty$, which is a nonintegrability condition
which in that setting is equivalent to having nonintegrable return time.

These assumptions are not met by the maps in our results.
Note in both our results, $F \in \mathcal{F}_*$. 
Thus $F'$ must vary dramatically in very small intervals, and so does $f'$.
This means unbounded distortion is actually the key for our results.

On the other hand, integrability of return time is independent from our results.
Note in (\ref{eqn_length_smooth}), we have $|K_n| \approx n^{-3}$, which gives an integrable return time.
One can also modify Section \ref{section_construction_smooth} to use $|K_n| \approx n^{-2}$, which gives a nonintegrable return time.





\bibliographystyle{unsrt}
\bibliography{ref}

\hfill \break

\end{document}